\renewcommand{\a}{\alpha}
\DeclareMathOperator{\RE}{Re} \DeclareMathOperator{\IM}{Im}
\numberwithin{equation}{section}
\newtheorem{theorem}{Theorem}[section]
\newtheorem{lemma}[theorem]{Lemma}
\theoremstyle{remark}
\begin{document}

\title[Starlike functions]{Radius of Starlikeness for Classes of Analytic Functions}

\author[K. Khatter]{Kanika Khatter}
\address{Department of Mathematics, SGTB Khalsa College, University of Delhi,
Delhi--110 007, India}
\email{kanika.khatter@yahoo.com}

\author[S. K. Lee]{See Keong Lee}
\address{School of Mathematical Sciences,
Universiti Sains Malaysia,
11800 USM, Penang, Malaysia}
\email{sklee@usm.my}

\author[V. Ravichandran]{V. Ravichandran}
\address{Department of Mathematics, NIT Tiruchirappalli,
Tamil Nadu--620015, India}
\email{vravi68@gmail.com}

\begin{abstract} We consider normalized analytic function $f$ on the open unit disk for which either $\RE f(z)/g(z)>0$,   $|f(z) /g(z)  - 1|<1$ or  $\RE (1-z^2) f(z)  /z>0$ for some analytic function $g$ with $\RE (1-z^2) g(z)  /z>0$.    We have obtained the radii for  these functions to   belong to various subclasses of starlike functions. The subclasses considered include the classes of starlike functions of order $\alpha$, lemniscate starlike functions and  parabolic starlike functions.
\end{abstract}

\keywords{starlike functions,  exponential function, lemniscate of Bernoulli, radius problems, coefficient estimate}

\subjclass[2010]{30C45, 30C80}


 \maketitle

\section{Introduction}

For any two classes $\mathcal{G}$ and $\mathcal{H}$ of analytic functions defined on the unit disk $\mathbb{D}$, the $\mathcal{H}$-radius for the class $\mathcal{G}$, denoted by $\mathcal{R}_{\mathcal{H}}(\mathcal{G})$,  is the maximal radius $\rho \leq 1$ such that the $f\in \mathcal{G}$ implies that the function $f_r$, defined by $f_r(z)=f(rz)/r$, belongs to class $\mathcal{H}$ for all $0< r \leq \rho$.   Among the radius problems for various subclasses of analytic functions, one direction of study focuses on obtaining the radius   for classes consisting of functions characterised by ratio of the function $f$ and another function $g$, where $g$ is a function belonging to some special subclass of $\mathcal{A}$ of all analytic functions on $\mathbb{D}$ normalized by $f(0)=0=f'(0)-1$.  MacGregor \cite{Mac,Mac1} obtained the radius of starlikeness for the class of functions $f \in \mathcal{A}$ satisfying either  $\RE(f(z)/g(z))>0$ or $|f(z)/g(z)-1|<1$ for some $g \in \mathcal{K}$. Ali \emph{et al.}\cite{Ali} estimated several radii for classes of functions satisfying either (i)~$\RE(f(z)/g(z))>0$, where $\RE(g(z)/z)>0$ or $\RE(g(z)/z)>1/2$; (ii) $|f(z)/g(z)-1|<1$, where $\RE(g(z)/z)>0$ or $g$ is convex; (iii) $|f'(z)/g'(z)-1|<1$, where $\RE g'(z)>0$.
The work is further investigated in \cite{asha}. These classes are related to the  Caratheodory class  $\mathcal{P}$ consisting  of all analytic functions $p$ with $p(0)=1$ and $\RE p(z)>0$ for all $z \in \mathbb{D}$.

Motivated by the aforesaid studies, we consider  the following three classes    $\mathcal{K}_1$, $\mathcal{K}_2$, and $\mathcal{K}_3$:
\begin{align*}
  \mathcal{K}_1&:= \left\{ f \in \mathcal{A} : \frac{f(z)}{g(z)}\in \mathcal{P},  ~~\text{for some}~~ g \in \mathcal{A} ,\ \RE   \frac{1-z^2}{z}g(z) >0 \right\},\\
  \mathcal{K}_2&:= \left\{ f \in \mathcal{A} : \left| \frac{f(z)}{g(z)} - 1 \right|<1, ~~ \text{for some}~~ g\in \mathcal{A} , \ \RE \frac{1-z^2}{z}g(z) >0 \right\},
  \intertext{and}
  \mathcal{K}_3&:= \left\{ f \in \mathcal{A} : \RE  \frac{1-z^2}{z}f(z) >0 \right\},
\end{align*}
and estimate the radius for the functions in the classes to belong to various subclasses of starlike functions which we discuss below.

Let $f, F$ be analytic   on $\mathbb{D}:= \{z \in \mathbb{C}: |z| < 1\}$; the function $f$ is subordinate to $F$, written $f\prec F$, provided $f=F\circ w$ for some analytic self-mapping  $w$ of the unit disk $ \mathbb{D}$ that fixes the origin. Subordination is very useful in the study of   subclasses of univalent functions. For instance, the concept of Hadamard product and subordination was used in \cite{uni} to  introduce   the class of all functions $f$ satisfying $z{(k_\a * f)'}/{(k_\a * f)}\prec h$ where $k_\a(z)= z/(1-z)^{\a}$, $\a \in \mathbb{R}$, $f \in \mathcal{A}$ and $h$ is a convex function.
Later in   1989, Shanmugam \cite{shan} studied  the class $\mathcal{S}_g^*(h)$ of all functions $f\in{\mathcal{A}}$ satisfying $z(f*g)'/(f*g) \prec h$ where $h$ is a convex function and $g$ is a fixed function in $\mathcal{A}$. By replacing $g$ with the functions $z/(1-z)$ and $z/(1-z)^2$, we get the subclasses $\mathcal{S}^*(h)$ and $\mathcal{K}(h)$ of Ma-Minda starlike and convex functions, respectively.
In 1992, Ma and Minda \cite{mam} studied the distortion, growth, covering and coefficient estimates for these functions with the weaker assumption of starlikeness on $h$.  These classes  unifies several  subclasses of starlike and convex functions. When $h$ is the mapping of $\mathbb{D}$ onto the right half-plane,  $\mathcal{S}^*(h)$ and $\mathcal{K}(h)$ reduce to the class $\mathcal{S}^*$ of starlike and $\mathcal{K}$ of convex functions, respectively. For $h(z)=(1+Az)/(1+Bz)$, with  $-1 \leq B <A \leq 1$,  the classes become $\mathcal{S}^*[A,B]$  of Janowski starlike functions and $\mathcal{K}[A,B]$ of Janowski convex functions.  For $A= 1- 2 \alpha$ and $B = -1$ where $0 \leq \a < 1$,  these subclasses become $\mathcal{S}^*(\a)$ of the starlike functions of order $\a$  and $\mathcal{K}(\alpha)$ of   convex functions of order $\a$, respectively  introduced   by Robertson \cite{rob}.  For $h (z)= \sqrt{1+z}$,  the class $\mathcal{S}^*(h)$ becomes the class $\mathcal{S}^*_{L} $ of the lemniscate starlike functions introduced and studied by Sok\'{o}l and Stankiewicz \cite{sok,sok2}; analytically,  $f \in \mathcal{S}^*_{L}$  if  $|(z f'(z)/f(z))^2-1|<1$.

Mendiratta \emph{et al.}\cite{sumit,sumit2} studied the classes $\mathcal{S}^*_e = \mathcal{S}^*(e^z)$ and $\mathcal{S}^*_{RL} = \mathcal{S}^*(h_{RL})$, where
\[ h_{RL}:= \sqrt{2} - (\sqrt{2} -1) \sqrt{\frac{1-z}{1+2(\sqrt{2}-1)z}}.\] Indeed, a  function $f$ belongs to $\mathcal{S}^*_e$ or to $ \mathcal{S}^*_{RL}$ if $z f'(z)/f(z)$ respectively belongs to  $\{ w \in \mathbb{C}: |\log w|<1\}$  or $\{ (w - \sqrt{2})^2 -1 < 1 \}$. Sharma \emph{et al.}\cite{sharma} defined and studied the class of functions defined by $\mathcal{S}^*_c = \mathcal{S}^* (h_c(z))$, where $h_c(z) = 1 + (4/3) z + (2/3) z^2$; a function $f\in \mathcal{S}^*_c$ if  $z f'(z)/f(z)\in \{ x + i y : (9 x^2 + 9 y^2 -18 x + 5)^2 -16 (9x^2 + 9 y^2 -6 x +1) = 0 \}$. Cho \emph{et al.}\cite{cho} defined and studied the class $\mathcal{S}^*_{\sin} = \mathcal{S}^*(1 + \sin z)$. Raina and Sokol \cite{raina} defined the class $\mathcal{S}^*_{\leftmoon} = \mathcal{S}^*(h_{\leftmoon})$, where $h_{\leftmoon} = z + \sqrt{1+ z^2}$ and $\mathcal{S}^*_{\leftmoon}$ consists of functions for which $z f'(z)/f(z)$ lies in the the leftmoon region defined by $\Omega_{\leftmoon}= h_{\leftmoon}(\mathbb{D}) :=\{ w\in \mathbb{C}: |w^2 - 1| < 2 |w| \}$. Another particular case is the class $\mathcal{S}^*_R = \mathcal{S}^*(h_R)$ studied in \cite{kumar} where $h_R = 1+ (zk + z^2)/(k^2 - kz)$, and $k = \sqrt{2} + 1$.   The subclass $\mathcal{S}_{P} $ of  parabolic starlike functions (see the survey \cite{ronning} or \cite{ali,mam1, ganga}) consists of all normalized analytic  functions $f$   with $zf'(z)/f(z)$ lying  in the parabolic region $(\IM(w))^2< 2 \RE(w)-1$.

\section{Main Results}
The first theorem  gives the various radii of starlikeness for the class $\mathcal{K}_1$ which consists of functions $f \in \mathcal{A}$ satisfying $\RE(f(z)/g(z)) > 0$ for some $g \in \mathcal{A}$ and $\RE  ((1-z^2)g(z)/z) >0$.
Note that the functions $f_1,\ g_1: \mathbb{D}\rightarrow \mathbb{C}$ defined by
\begin{equation} \label{f1}
  f_1(z) = \frac{z(1+ i z)^2}{(1-z^2)(1-i z)^2} \quad \quad ~~~ \quad \text{and}~~~ \quad \quad g_1(z)= \frac{z (1+ i z)}{(1-z^2) (1- i z)}
\end{equation}
satisfy
\begin{equation*}
  \RE  \frac{f_1(z)}{g_1(z)}  = \RE  \frac{1-z^2}{z}g_1(z)  = \RE  \frac{1+ i z}{1- i z} >0.
\end{equation*}
This means the function $f_1 \in \mathcal{K}_1$ and so $\mathcal{K}_1 \neq \phi$. Further we will see that this function $f_1$ serves as an extremal function for many radii problems studied here.

\begin{theorem}\label{th1}
For the class $\mathcal{K}_1$, the following results hold:
\begin{enumerate}
  \item The $\mathcal{S}^*(\a)$-radius is the smallest positive real root of the equation $r^4(1+\a) - 4 r^3-2 r^2 -4 r +(1-\a)=0$, \quad $0 \leq \a <1$.
  \item The $\mathcal{S}^*_L$-radius is ${R}_{\mathcal{S}^*_L}= (\sqrt{5}-2)/(\sqrt{2}+1) \approx 0.0977826$. 
  \item The  $\mathcal{S}_{P}$-radius is the smallest positive real root of the equation $ 3 r^4 - 8r^3 -4 r^2 -8 r +1 = 0 $ i.e. ${R}_{\mathcal{S}_{P}} \approx 0.116675$.
  \item The  $\mathcal{S}^*_{e}$-radius is the smallest positive real root of the equation $(2r^2 +4 r +4 r^3 -1-r^4)e = r^4 - 1 $ i.e. ${R}_{\mathcal{S}^*_{e}} \approx 0.144684$.
  \item The $\mathcal{S}^*_{c}$-radius is the smallest positive real root of the equation $ 4 r^4 - 12 r^3 - 6 r^2 - 12 r + 2 = 0 $ i.e. ${R}_{\mathcal{S}^*_{c}} \approx 0.15182$.

  \item The $\mathcal{S}^*_{\leftmoon}$-radius is the smallest positive real root of the equation $ 4 r^3 + 2 r^2 + 4 r + \sqrt{2}(1- r^4) = 2 $ i.e. ${R}_{\mathcal{S}^*_{\leftmoon}} \approx 0.134993$.
  \item The $\mathcal{S}^*_{\sin}$-radius is ${R}_{\mathcal{S}^*_{\sin}}= (-2+\sqrt{4+ \sin{1}(2+ \sin{1})})/(2+ \sin{1}) \approx 0.185835$.

  \item The $\mathcal{S}^*_{RL}$-radius is ${R}_{\mathcal{S}^*_{RL}} \approx 0.0687813$.
\item The $\mathcal{S}^*_{R}$-radius is the smallest positive real root of the equation $ 4 r^3  + 2 r^2 + 4 r - r^4 -1 = 2 (1- \sqrt{2})(1- r^4) $ i.e. ${R}_{\mathcal{S}^*_{R}} \approx 0.0419413$.
\end{enumerate}
All the radii obtained are sharp.
\end{theorem}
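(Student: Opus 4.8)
\emph{Proof strategy.} The plan is to reduce all nine parts to one master inclusion. If $f\in\mathcal{K}_1$ with associated $g$, set $p(z)=f(z)/g(z)$ and $q(z)=(1-z^2)g(z)/z$; by definition $p,q$ lie in the Carath\'eodory class $\mathcal{P}$, and $f(z)=zp(z)q(z)/(1-z^2)$. Logarithmic differentiation gives
\[
\frac{zf'(z)}{f(z)}=\frac{1+z^2}{1-z^2}+\frac{zp'(z)}{p(z)}+\frac{zq'(z)}{q(z)}.
\]
For $|z|\le r$, substituting $w=z^2$ into $w\mapsto(1+w)/(1-w)$ shows that $(1+z^2)/(1-z^2)$ lies in the closed disk centred at $(1+r^4)/(1-r^4)$ of radius $2r^2/(1-r^4)$, while $|zp'(z)/p(z)|\le2r/(1-r^2)$ for $p\in\mathcal{P}$ disposes of the other two terms. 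Adding, one gets for every $f\in\mathcal{K}_1$ and $|z|\le r<1$
\[
\Bigl|\frac{zf'(z)}{f(z)}-c(r)\Bigr|\le R(r),\qquad c(r):=\frac{1+r^4}{1-r^4},\quad R(r):=\frac{2r^2+4r+4r^3}{1-r^4},
\]
with endpoints $c(r)+R(r)=(1+4r+r^2)/(1-r^2)$ and $c(r)-R(r)=(1-4r-2r^2-4r^3+r^4)/(1-r^4)$.

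Next, for each part I would invoke the known description — found in the papers cited in the Introduction — of those disks $\{w:|w-c|\le R\}$ that are contained in $\psi(\mathbb{D})$, where $\psi$ is the Ma--Minda function attached to the subclass ($\tfrac{1+(1-2\alpha)z}{1-z}$, $\sqrt{1+z}$, the parabolic map, $e^z$, $h_c$, $h_{\leftmoon}$, $1+\sin z$, $h_{RL}$, $h_R$, respectively). Since $c(r)\to1$ and all the radii sought are small, $c(r)$ stays in the range of validity of each such lemma up to the claimed radius, so the inclusion of the disk $\{w:|w-c(r)|\le R(r)\}$ in $\psi(\mathbb{D})$ becomes a single inequality in $c(r),R(r)$; clearing denominators produces exactly the displayed equation, and the radius is its smallest positive root. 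For $\mathcal{S}^*(\alpha),\mathcal{S}_P,\mathcal{S}^*_e,\mathcal{S}^*_c,\mathcal{S}^*_{\leftmoon},\mathcal{S}^*_R$ (and, via the corresponding lemma, $\mathcal{S}^*_{RL}$) the active constraint is that the left endpoint $c(r)-R(r)$ reach the left-most real point of $\psi(\mathbb{D})$ — namely $\alpha$, $1/2$, $1/e$, $1/3$, $\sqrt2-1$, $2\sqrt2-2$; for $\mathcal{S}^*_L$ and $\mathcal{S}^*_{\sin}$ it is that $c(r)+R(r)$ reach $\sqrt2$ and $1+\sin1$, the lemniscate case using in addition $\max_{|w-c|\le R}|w^2-1|=(c+R)^2-1$ when $c\ge1$ and $R$ is small. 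In parts (2) and (7) the polynomial so obtained carries the factor $1+r^2$; cancelling it leaves $(\sqrt2+1)r^2+4r-(\sqrt2-1)=0$ and $(2+\sin1)r^2+4r-\sin1=0$, whence the closed forms stated for $R_{\mathcal{S}^*_L}$ and $R_{\mathcal{S}^*_{\sin}}$.

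For sharpness I would exhibit, in each case, an $f\in\mathcal{K}_1$ for which $z\mapsto zf'(z)/f(z)$ carries a boundary point of $|z|=r$ onto $\partial\psi(\mathbb{D})$ exactly at the claimed radius. In the left-endpoint cases this is the function $f_1$ of \eqref{f1}: one computes $zf_1'(z)/f_1(z)=(1+z^2)/(1-z^2)+4iz/(1+z^2)$, so at $z=ir$ it equals $(1-r^2)/(1+r^2)-4r/(1-r^2)=c(r)-R(r)$, which meets the left-most point of $\psi(\mathbb{D})$ precisely when $r$ equals the stated radius. For $\mathcal{S}^*_L$ and $\mathcal{S}^*_{\sin}$ I would use instead $f^*(z)=z(1+z)/(1-z)^3$, which lies in $\mathcal{K}_1$ with $g^*(z)=z/(1-z)^2$ (indeed $f^*(z)/g^*(z)=(1-z^2)g^*(z)/z=(1+z)/(1-z)$ has positive real part); here $z(f^*)'(z)/f^*(z)=(1+4z+z^2)/(1-z^2)$ equals $c(r)+R(r)$ at $z=r$, reaching the right-most point of $\psi(\mathbb{D})$ at the critical radius.

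The delicate point is the master estimate itself: one must get the centre $(1+r^4)/(1-r^4)$ right — not $1$ — since this is exactly what makes the quartics come out with the stated coefficients; and in the second step one must, subclass by subclass, verify that $c(r)$ lies in the domain of the relevant disk-in-region lemma and correctly identify which endpoint of the disk is binding. After that the algebra (clearing denominators, the $1+r^2$ cancellations, and solving the resulting equations) is routine.
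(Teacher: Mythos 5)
Your proposal is correct and follows essentially the same route as the paper: the same factorization $f(z)=zp(z)q(z)/(1-z^2)$ with $p,q\in\mathcal{P}$, the same master disk $\bigl|zf'(z)/f(z)-(1+r^4)/(1-r^4)\bigr|\le 2r(2r^2+r+2)/(1-r^4)$ obtained from Shah's bound and the map $(1+z^2)/(1-z^2)$, the same disk-inclusion lemmas for each target region, and the same extremal functions ($f_1$ for the left-endpoint cases, while your $z(1+z)/(1-z)^3$ is just the reflection $z\mapsto -z$ of the paper's $z(1-z)/(1+z)^3$ used for $\mathcal{S}^*_L$ and is in fact a cleaner sharpness witness for $\mathcal{S}^*_{\sin}$ than the paper's appeal to $f_1$). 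The one slip is your parenthetical claim that for $\mathcal{S}^*_{RL}$ the active constraint is the left endpoint reaching the leftmost point of $\Omega_{RL}$: since $a=(1+r^4)/(1-r^4)\ge 1>\sqrt{2}/3$, the applicable branch of that lemma gives $r_a=\bigl((1-(\sqrt{2}-a)^2)^{1/2}-(1-(\sqrt{2}-a)^2)\bigr)^{1/2}$ rather than a linear endpoint condition, so the resulting equation is irrational in $r$ --- but as you defer to ``the corresponding lemma'' for that case, this does not affect the outcome.
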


We would use the following lemmas in order to prove our results:
\begin{lemma}[{\cite[Lemma 2.2, p.\ 4]{jain}}]\label{T1}
 For $0<a<\sqrt{2}$, let $r_a$ be given by
 \begin{align*}
   r_a= \left\{
          \begin{array}{ll}
            (\sqrt{1-a^2}- (1-a^2))^{1/2}, & \hbox{$0< a \leq 2 \sqrt{2}/3$;} \\
            \sqrt{2}-a, & \hbox{$ 2 \sqrt{2}/3 \leq a < \sqrt{2}$.}
          \end{array}
        \right.
 \end{align*}
Then $\{w: |w-a|< r_a \} \subseteq \{w: |w^2-1|<1\}$.
\end{lemma}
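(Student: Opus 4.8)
The plan is to recognise $\{w:|w^2-1|<1\}$ as the interior of Bernoulli's lemniscate, a figure-eight whose right-hand lobe $\Omega$ is the component meeting the positive real axis, and then to prove the inclusion by identifying $r_a$ with the Euclidean distance from the centre $a$ to $\partial\Omega$. Since $0<a<\sqrt2$ forces $|a^2-1|<1$, the point $a$ lies inside $\Omega$; and because $\Omega$ is open, the open disc $\{|w-a|<\rho\}$ is contained in $\Omega$ for every $\rho\le\operatorname{dist}(a,\partial\Omega)$ (a connected set containing $a\in\Omega$ and meeting no boundary point must stay inside $\Omega$). Thus the whole lemma reduces to the single claim $r_a=\operatorname{dist}(a,\partial\Omega)$, and the two formulae in the statement should correspond to whether the nearest boundary point lies on the real axis or on the curved arc of the lobe.

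To evaluate the distance I would parametrise the boundary $|w^2-1|=1$ by writing $w^2=1+e^{i\phi}=2\cos(\phi/2)\,e^{i\phi/2}$, so that the right lobe is traced by $w=\sqrt{2\cos(\phi/2)}\,e^{i\phi/4}$ with $\phi\in(-\pi,\pi)$. Setting $s=\phi/2$ and using $|w|^2=|w^2|=2\cos s$ together with $\RE w=\sqrt{2\cos s}\,\cos(s/2)$, the squared distance becomes
\[
 D(s)=|w-a|^2=2\cos s-2a\sqrt{2\cos s}\,\cos(s/2)+a^2 .
\]
By symmetry it suffices to treat $s\in[0,\pi/2)$. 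Differentiating and simplifying with $\sin s=2\sin(s/2)\cos(s/2)$ and $\cos s=2\cos^2(s/2)-1$, I expect the derivative to factor as
\[
 D'(s)=2\sin(s/2)\left[\frac{a\,(4\cos^2(s/2)-1)}{\sqrt{2\cos s}}-2\cos(s/2)\right].
\]
The factor $\sin(s/2)$ vanishes at $s=0$, the real-axis endpoint, where $D(0)=(\sqrt2-a)^2$; the interior critical points solve $2\cos(s/2)\sqrt{2\cos s}=a(4\cos^2(s/2)-1)$. Substituting this relation back into $D(s)$ I anticipate the collapse $D=(1-a^2)\bigl(4\cos^2(s/2)-2\bigr)$, which together with the critical-point relation reduces to $D=\sqrt{1-a^2}-(1-a^2)$ — exactly $r_a^2$ in the first regime.

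The decisive step is then the case analysis. Writing $X=4\cos^2(s/2)\in(2,4)$ for an interior critical point, the critical-point relation rearranges to $X=1+1/\sqrt{1-a^2}$, and the requirement $X<4$ holds if and only if $a<2\sqrt2/3$, with $X\to4$ (the interior critical point merging into the endpoint $s=0$) precisely at $a=2\sqrt2/3$. Hence an interior minimiser exists only on the first range, and there I must confirm that it beats the endpoint, i.e. $\sqrt{1-a^2}-(1-a^2)<(\sqrt2-a)^2$; after squaring this is equivalent to $9a^2-12\sqrt2\,a+8>0$, a quadratic with zero discriminant and double root $a=2\sqrt2/3$, so the inequality is strict throughout $0<a<2\sqrt2/3$ and degenerates to equality at the endpoint. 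For $2\sqrt2/3\le a<\sqrt2$ no interior critical point survives, so the minimum sits at $s=0$ and $r_a=\sqrt2-a$. Finally I would rule out the left lobe as a closer competitor: any of its boundary points has $\RE w\le0$, whence $|w-a|^2\ge a^2$, and a one-line check gives $r_a^2<a^2$ in both regimes (in the first because $\sqrt{1-a^2}<1$, in the second because $a>1/\sqrt2$), so the right-lobe computation indeed yields the global distance. The main obstacle I foresee is the trigonometric differentiation and the algebraic collapse of $D$ at the critical point; once that simplification is secured, the threshold $2\sqrt2/3$ and the two formulae emerge from the elementary discriminant computation.
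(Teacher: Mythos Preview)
The paper does not prove this lemma; it is quoted verbatim from \cite{jain} and used as a black box in the radius computations.  Your proposal therefore goes well beyond what the paper does: you supply a complete argument rather than a citation.

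Your strategy is sound and the computations check out.  Parametrising the right lobe by $w^2=1+e^{i\phi}$ and minimising the squared distance $D(s)$ is the natural approach; the factorisation of $D'(s)$ is correct, and with $X=4\cos^2(s/2)$ the critical-point equation $\sqrt{X(X-2)}=a(X-1)$ indeed gives $X=1+1/\sqrt{1-a^2}$, forcing $X<4$ precisely when $a<2\sqrt2/3$.  The collapse $D=(1-a^2)(X-2)=\sqrt{1-a^2}-(1-a^2)$ at the critical point is exactly right, and the discriminant argument showing this beats the endpoint value $(\sqrt2-a)^2$ throughout the first range (with equality at the threshold) is clean.  For $a\ge 2\sqrt2/3$ the bracket in $D'(s)$ is nonnegative near $s=0$ (since $3a/\sqrt2-2\ge 0$) and there is no interior zero, so $D$ increases and the minimum sits at $s=0$.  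Your final remark disposing of the left lobe and the origin via $r_a<a$ is correct in both regimes.

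One small point worth tightening when you write it up: you should record explicitly that for $a<2\sqrt2/3$ the critical value is also below the other endpoint $D(\pi/2^-)=a^2$ (equivalently $\sqrt{1-a^2}<1$), so that the single interior critical point is genuinely the global minimum on $[0,\pi/2]$ and not a local maximum sandwiched between the two endpoint values.  You allude to this in the left-lobe check, but it belongs logically to the right-lobe minimisation as well.
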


\begin{lemma}[{\cite[Lemma 1, p.\ 321]{shan}}]\label{T2}
 For $a> 1/2$, let $r_a$ be given by
 \begin{align*}
   r_a= \left\{
          \begin{array}{ll}
            a-1/2, & \hbox{$1/2< a \leq 3/2$;} \\
            \sqrt{2a-2}, & \hbox{$ a \geq 3/2$.}
          \end{array}
        \right.
 \end{align*}
Then $\{w: |w-a|< r_a \} \subseteq \{w: \RE w > |w-1|\} =  \Omega_{1/2}$. Here, $\Omega_p$ is a parabolic region which is symmetric with respect to the real axis and vertex at $(p,0)$.
\end{lemma}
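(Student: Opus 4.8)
The plan is to reduce the claimed inclusion to an elementary distance computation. First I would make the parabolic region explicit: writing $w = u + iv$, the condition $\RE w > |w-1|$ forces $u > 0$ and, after squaring $u^2 > (u-1)^2 + v^2$, collapses to
\[
  \Omega_{1/2} = \{\, u + iv : v^2 < 2u - 1 \,\},
\]
the open region to the right of the parabola $v^2 = 2u - 1$ whose vertex sits at $(1/2,0)$. Since $a > 1/2$, the center $(a,0)$ satisfies $0 = v^2 < 2a - 1$ and hence lies in $\Omega_{1/2}$.

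Next I would invoke the general principle that for an open set $U$ and a point $p \in U$, the open disk centered at $p$ of radius $\mathrm{dist}(p, U^{c})$ is contained in $U$; indeed any $q$ with $|q - p| < \mathrm{dist}(p, U^{c})$ cannot lie in the closed complement $U^{c}$. Because $(a,0) \in \Omega_{1/2}$, the nearest complement point is attained on the boundary parabola (a nearest point of a closed set to an exterior point always lies on its boundary). It therefore suffices to compute the distance from $(a,0)$ to the curve $v^2 = 2u - 1$ and to verify that it equals $r_a$ on each of the two ranges.

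The core computation is the minimization. Parametrizing the parabola by $u = (v^2 + 1)/2$ and setting $t = v^2 \ge 0$, the squared distance from $(a,0)$ becomes
\[
  D^2(t) = \Bigl(\tfrac{t+1}{2} - a\Bigr)^{2} + t = \tfrac14 t^2 + \bigl(\tfrac32 - a\bigr) t + \bigl(\tfrac12 - a\bigr)^{2},
\]
an upward parabola in $t$ whose unconstrained vertex is at $t^\ast = 2a - 3$. I would then split into the two announced cases. When $1/2 < a \le 3/2$ we have $t^\ast \le 0$, so $D^2$ is increasing on $t \ge 0$ and is minimized at $t = 0$, giving $D^2 = (a - 1/2)^{2}$, i.e. $r_a = a - 1/2$. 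When $a \ge 3/2$ the vertex $t^\ast = 2a - 3 \ge 0$ is admissible, and the minimum value is $D^2 = \bigl(\tfrac12 - a\bigr)^2 - \bigl(\tfrac32 - a\bigr)^2 = 2a - 2$, whence $r_a = \sqrt{2a - 2}$. The two expressions agree at $a = 3/2$, confirming that the threshold is continuous.

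The step requiring the most care is not any single calculation but the justification that the minimal distance is genuinely realized on the parabola and that the resulting open disk stays strictly inside the open region; this is exactly where the complement argument of the second paragraph does the work, since the center being interior guarantees the infimum is attained at a parabola point and that an open disk of that radius meets $\Omega_{1/2}^{c}$ only in the limit. Sharpness, though not demanded by the stated inclusion, drops out of the same computation: the minimizing parabola point lies on the boundary circle of the extremal disk, so no larger radius can be chosen.
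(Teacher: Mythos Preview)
Your proof is correct. The paper does not supply its own argument for this lemma; it is quoted verbatim from the cited reference \cite{shan} and used as a black box, so there is no in-paper proof to compare against. Your reduction to the Euclidean distance from the center $(a,0)$ to the boundary parabola $v^{2}=2u-1$, together with the one-variable minimization in $t=v^{2}$, is the standard and cleanest way to establish the result, and the case split at $a=3/2$ arises exactly as you describe from whether the unconstrained minimizer $t^{\ast}=2a-3$ is admissible.
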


\begin{lemma}[{\cite[Lemma 2.2, p.\ 368]{sumit}}]\label{T3}
 For $ e^{-1} < a < e$, let $r_a$ be given by
 \begin{align*}
   r_a= \left\{
          \begin{array}{ll}
            a-e^{-1}, & \hbox{$e^{-1}< a \leq  (e+e^{-1})/2$;} \\
            e - a , & \hbox{$ (e+e^{-1})/2 \leq a < e$.}
          \end{array}
        \right.
 \end{align*}
Then $\{w: |w-a|< r_a \} \subseteq \{w: |\log w|<1\} =  \Omega_e$, which is the image of the unit disk $\mathbb{D}$ under the exponential function.
\end{lemma}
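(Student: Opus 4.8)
The plan is to identify $r_a$ with the Euclidean distance from the real centre $a$ to $\partial\Omega_e$, and to prove that this distance is attained at one of the two real endpoints $e^{-1},e$ of the region. Since $\Omega_e=\{e^{\zeta}:|\zeta|<1\}$ is open and contains every real $a\in(e^{-1},e)$ (because then $|\log a|<1$), the largest disk about $a$ contained in $\Omega_e$ has radius equal to the distance from $a$ to the boundary curve $w(\theta)=e^{e^{i\theta}}=e^{\cos\theta}(\cos(\sin\theta)+i\sin(\sin\theta))$; by the symmetry $w(-\theta)=\overline{w(\theta)}$ it suffices to minimise over $\theta\in[0,\pi]$. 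A direct expansion gives the squared distance
\begin{equation}\label{eq:Dtheta}
D(\theta):=|w(\theta)-a|^2=e^{2\cos\theta}-2a\,e^{\cos\theta}\cos(\sin\theta)+a^2 .
\end{equation}
Its endpoint values $D(0)=(e-a)^2$ and $D(\pi)=(a-e^{-1})^2$ are precisely the squares of the two candidate radii, and $D(\pi)\le D(0)$ if and only if $a\le (e+e^{-1})/2$, exactly the threshold in the statement. Thus the whole lemma reduces to showing that $\min_{[0,\pi]}D$ is attained at $\theta=\pi$ when $a\le (e+e^{-1})/2$ and at $\theta=0$ otherwise.

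The device I would use is that $D$ is affine in $a$ relative to each endpoint. Writing $h(\theta):=e^{\cos\theta}\cos(\sin\theta)=\RE w(\theta)$, one has
\begin{equation}\label{eq:affine}
D(\theta)-D(\pi)=(e^{2\cos\theta}-e^{-2})-2a\,(h(\theta)-e^{-1}),\qquad D(\theta)-D(0)=(e^{2\cos\theta}-e^{2})-2a\,(h(\theta)-e).
\end{equation}
From the identity $\sin\theta\cos(\sin\theta)+\cos\theta\sin(\sin\theta)=\sin(\theta+\sin\theta)$ one computes $h'(\theta)=-e^{\cos\theta}\sin(\theta+\sin\theta)$; since $\theta+\sin\theta$ increases from $0$ to $\pi$ on $[0,\pi]$ (its derivative is $1+\cos\theta\ge 0$), we have $\sin(\theta+\sin\theta)>0$ there, so $h$ decreases strictly from $h(0)=e$ to $h(\pi)=e^{-1}$. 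Consequently $h(\theta)-e^{-1}\ge 0$ and $h(\theta)-e\le 0$ throughout, so the first difference in \eqref{eq:affine} is a decreasing function of $a$ and the second is an increasing function of $a$.

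It then suffices to settle the borderline centre $a=(e+e^{-1})/2$, for which \emph{both} right-hand sides of \eqref{eq:affine} collapse to the same expression:
\begin{equation}\label{eq:G}
G(\theta):=e^{2\cos\theta}-(e+e^{-1})\,e^{\cos\theta}\cos(\sin\theta)+1\ge 0\qquad(\theta\in[0,\pi]).
\end{equation}
Granting $G\ge 0$, the monotonicity in $a$ from the previous step propagates the inequality to all centres: for $a\le (e+e^{-1})/2$ one obtains $D(\theta)\ge D(\pi)$ for every $\theta$, and for $a\ge (e+e^{-1})/2$ one obtains $D(\theta)\ge D(0)$. This places the minimum at the asserted tip, giving $r_a=a-e^{-1}$ and $r_a=e-a$ respectively, and taking square roots yields the stated radius. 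Sharpness is immediate: the minimising $\theta$ is an endpoint, so the corresponding tip ($e^{-1}$ or $e$) lies simultaneously on $\partial\Omega_e$ and on the circle $|w-a|=r_a$, whence no larger concentric disk fits inside $\Omega_e$.

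The main obstacle is the transcendental inequality \eqref{eq:G}. One checks $G(0)=G(\pi)=0$ and, from the formula for $h'$, also $G'(0)=G'(\pi)=0$, so both endpoints are degenerate zeros and the estimate is tight there; the real content is that $G$ does not dip below zero in between. I would prove this by substituting $x=\cos\theta\in[-1,1]$, turning $G$ into $g(x)=e^{2x}-(e+e^{-1})e^{x}\cos\sqrt{1-x^2}+1$ with $g(\pm1)=0$, and then either (i) analysing the sign of $g'$ to exclude an interior negative minimum, or (ii) replacing $\cos\sqrt{1-x^2}$ by a suitable algebraic lower bound so as to reduce \eqref{eq:G} to an elementary inequality in $e^{x}$. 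The delicate region is near $x=\pm1$, where the naive ``nonnegative discriminant'' bound for the quadratic $s\mapsto s^2-(e+e^{-1})\cos(\sin\theta)\,s+1$ in $s=e^{\cos\theta}$ breaks down (the discriminant becomes positive once $\cos(\sin\theta)>2/(e+e^{-1})$); this is where I expect the bulk of the careful estimation to be concentrated.
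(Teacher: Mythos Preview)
The paper does not prove this lemma; it is quoted from \cite{sumit} and used as a black box in the proofs of the main theorems. There is thus no argument in the paper to compare against, and your proposal attempts considerably more than the paper itself does.

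Your reduction is correct and well organised. The parametrisation $w(\theta)=e^{e^{i\theta}}$, the formula $h'(\theta)=-e^{\cos\theta}\sin(\theta+\sin\theta)$, the strict decrease of $h$ on $[0,\pi]$, and the affine-in-$a$ observation in \eqref{eq:affine} all check out; together they legitimately collapse the whole lemma to the single transcendental inequality \eqref{eq:G} at the midpoint $a=(e+e^{-1})/2$. You are also right that $G$ vanishes to second order at both endpoints and that the quadratic-discriminant shortcut for $s\mapsto s^{2}-(e+e^{-1})\cos(\sin\theta)\,s+1$ fails on the subintervals near $\theta=0,\pi$ where $\cos(\sin\theta)>2/(e+e^{-1})$.

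The genuine gap is that \eqref{eq:G} is never established. You offer two strategies---sign analysis of $g'$ after the substitution $x=\cos\theta$, or an algebraic minorant for $\cos\sqrt{1-x^2}$---but carry out neither, and you yourself flag that the tight behaviour at $x=\pm1$ is exactly where the work concentrates. Since after your (valid) reduction the inequality $G\ge0$ \emph{is} the lemma, the proposal remains an outline rather than a proof: the decisive estimate is correctly isolated but not delivered. To close it you must actually handle the two boundary layers near $x=\pm1$, for instance by a local second-order expansion there (showing $G''(0),G''(\pi)>0$) patched with the discriminant argument on the complementary compact subinterval where $\cos(\sin\theta)\le 2/(e+e^{-1})$.
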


\begin{lemma}[{\cite[Lemma 2.5, p.\ 926]{sharma}}]\label{T4}
 For $ 1/3 < a < 3$, let $r_a$ be given by
 \begin{align*}
   r_a= \left\{
          \begin{array}{ll}
            (3a-1)/3, & \hbox{$1/3< a \leq 5/3$;} \\
            3- a , & \hbox{$  5/3 \leq a \leq 3$.}
          \end{array}
        \right.
 \end{align*}
Then $\{w: |w-a|< r_a \} \subseteq  \Omega_c$.
Here $\Omega_c$ is the region bounded by the cardioid $\{x+ i y: (9x^2+9y^2-18x+5)^2 - 16 (9 x^2+ 9 y^2 -6x + 1) = 0 \}$.
\end{lemma}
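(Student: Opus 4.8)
The plan is to realize $\Omega_c$ as the image $h_c(\mathbb{D})$ of the unit disk under the Ma--Minda function $h_c(z) = 1 + (4/3)z + (2/3)z^2$, so that the bounding cardioid is the curve $\theta \mapsto h_c(e^{i\theta})$, $\theta \in [0,2\pi)$. Since $1/3 < a < 3$ places the center $a$ on the real-axis cross-section $(1/3,3)$ of $\Omega_c$ (the images of $\theta=\pi$ and $\theta=0$, where one checks $\IM h_c(e^{i\theta}) = \frac43\sin\theta(1+\cos\theta)$ vanishes), the whole task reduces to computing the distance from $a$ to the boundary cardioid and matching it with $r_a$. The open disk $\{w:|w-a|<r_a\}$, being connected, containing the interior point $a$, and disjoint from $\partial\Omega_c$, must then lie inside $\Omega_c$; this connectedness step is how I would pass from a distance estimate to the asserted set inclusion.

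The key reduction is to write the squared distance $D(\theta):=|h_c(e^{i\theta})-a|^2 = |h_c(e^{i\theta})|^2 - 2a\,\RE h_c(e^{i\theta}) + a^2$ and, using $\cos 2\theta = 2\cos^2\theta - 1$, express it as a quadratic in $c:=\cos\theta\in[-1,1]$:
\[
D(c) = \frac{8(1-a)}{3}\,c^2 + \frac{40-24a}{9}\,c + \left(a^2 - \frac{2a}{3} + \frac{17}{9}\right).
\]
Evaluating at the endpoints gives $D(-1) = (a-\tfrac13)^2$ and $D(1) = (3-a)^2$, namely the squared distances to the cusp $w=1/3$ and to the tip $w=3$. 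Thus everything comes down to showing that the minimum of $D$ over $[-1,1]$ is attained at an endpoint and equals $r_a^2$.

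The case analysis hinges on the sign of the leading coefficient $8(1-a)/3$. For $1 < a < 3$ the parabola opens downward, so its minimum on $[-1,1]$ is automatically at an endpoint; since $a-\tfrac13 \le 3-a$ exactly when $a\le 5/3$, the minimum is $D(-1)$ for $a\le 5/3$ and $D(1)$ for $a\ge 5/3$, giving $r_a = (3a-1)/3$ and $r_a = 3-a$ respectively. For $1/3 < a \le 1$ the parabola opens upward (and is linear at $a=1$), and here the decisive computation is that its vertex $c^\ast = -(5-3a)/(6(1-a))$ satisfies $c^\ast \le -1$ precisely when $a \ge 1/3$; hence $D$ is increasing on $[-1,1]$ and its minimum is again $D(-1) = (a-\tfrac13)^2$. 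Assembling the cases yields $r_a = (3a-1)/3$ for $1/3 < a \le 5/3$ and $r_a = 3-a$ for $5/3 \le a \le 3$, with both expressions agreeing at $a=5/3$.

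I expect the main obstacle to be the upward-opening regime $1/3 < a < 1$, where one must genuinely rule out an interior minimizer of the quadratic; the clean fact that the vertex sits exactly at $c=-1$ when $a=1/3$ and slides strictly to the left as $a$ grows is what forces the endpoint $c=-1$ to be the true minimizer. The rest of the argument is the routine algebra of the two-piece comparison together with the connectedness observation securing the inclusion $\{w:|w-a|<r_a\}\subseteq\Omega_c$.
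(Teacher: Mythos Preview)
Your proof is correct: the reduction to the quadratic $D(c)$ in $c=\cos\theta$ is computed accurately (the $c^4$ and $c^3$ terms in $|h_c(e^{i\theta})-a|^2$ indeed cancel), the endpoint evaluations $D(-1)=(a-\tfrac13)^2$ and $D(1)=(3-a)^2$ are right, and the vertex analysis $c^\ast=-\dfrac{5-3a}{6(1-a)}\le -1\iff a\ge \tfrac13$ correctly disposes of the only nontrivial case $1/3<a<1$. The connectedness step, together with the observation that $h_c$ is univalent so that $\partial\Omega_c=h_c(\partial\mathbb{D})$ and the real cross-section of $\Omega_c$ is exactly $(1/3,3)$, legitimately converts the distance bound into the set inclusion.

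As for comparison with the paper: there is nothing to compare. Lemma~\ref{T4} is not proved in this paper at all; it is simply quoted from \cite[Lemma~2.5]{sharma} and used as a black box in the proofs of Theorems~2.1--2.11. So you have supplied a self-contained argument where the paper only provides a citation.
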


\begin{lemma}[{\cite[Lemma 3.3, p.\ 7]{cho}}]\label{T5}
 For $ 1 - \sin 1 < a < 1+ \sin 1 $, let $r_a= \sin 1 - |a-1|$. Then $\{w: |w-a|< r_a \} \subseteq  \Omega_{sin}$. Here $\Omega_{sin}$ is the image of the unit disk $\mathbb{D}$ under the function $1+ \sin z$.
\end{lemma}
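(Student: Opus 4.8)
The plan is to prove the inclusion by showing directly that the distance from the centre $a$ to the boundary $\partial\Omega_{\sin}$ is at least $r_a$. Since $\sin$ is univalent on the strip $|\RE z|<\pi/2$, which contains $\mathbb{D}$, the function $1+\sin z$ is univalent on $\mathbb{D}$ and $\Omega_{\sin}$ is a Jordan domain with boundary $\gamma(\theta)=1+\sin(e^{i\theta})$. As $a$ is an interior point, the open disk $\{|w-a|<r_a\}$ lies in $\Omega_{\sin}$ as soon as $\min_{\theta}|\gamma(\theta)-a|\ge r_a$. First I would exploit two symmetries. Because $\sin$ is odd and $\mathbb{D}$ is invariant under $z\mapsto-z$, the set $\sin(\mathbb{D})$ is symmetric about $0$, so $\Omega_{\sin}$ is symmetric about the line $\RE w=1$; this lets me assume $1\le a<1+\sin 1$ and recover the case $a\le 1$ by the reflection $w\mapsto 2-w$. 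Since $\sin$ has real coefficients, $\gamma(-\theta)=\overline{\gamma(\theta)}$, so $\partial\Omega_{\sin}$ is symmetric about the real axis and I may restrict to $\theta\in[0,\pi]$.

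Writing $\sin(e^{i\theta})=p+iq$ with $p=\sin(\cos\theta)\cosh(\sin\theta)$, $q=\cos(\cos\theta)\sinh(\sin\theta)$, and setting $t=a-1\in[0,\sin 1)$, a short computation gives
\[
|\gamma(\theta)-a|^2-r_a^2=(p^2+q^2-\sin^2 1)+2t(\sin 1-p),
\]
where I would simplify $p^2+q^2=|\sin(e^{i\theta})|^2=\sin^2(\cos\theta)+\sinh^2(\sin\theta)$. The decisive observation is that the right-hand side is affine in the parameter $t$, so it is nonnegative for every $t\in[0,\sin 1]$ as soon as it is nonnegative at the two endpoints. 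At $t=\sin 1$ it collapses to $(p-\sin 1)^2+q^2\ge 0$, which is automatic. Hence the whole lemma reduces to the single inequality arising at $t=0$, namely
\[
H(\theta):=\sin^2(\cos\theta)+\sinh^2(\sin\theta)\ge\sin^2 1,\qquad\theta\in[0,\pi].
\]

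This last inequality is the heart of the matter, and the only place where genuine analysis is needed; everything else is bookkeeping. I would prove it by noting that $H$ depends only on $y=\sin\theta\in[0,1]$, since $\cos^2\theta=1-y^2$ and $\sin^2$ is even. Writing $u=\sqrt{1-y^2}$, differentiation gives $\tfrac{d}{dy}H=\sinh(2y)-(y/u)\sin(2u)$, and the elementary bounds $\sin(2u)\le 2u$ together with $\sinh(2y)\ge 2y$ yield $(y/u)\sin(2u)\le 2y\le\sinh(2y)$, so $H$ is nondecreasing and therefore $H\ge H(0)=\sin^2 1$, with equality exactly at $\theta\in\{0,\pi\}$. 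Combining this with the two reductions establishes $|\gamma(\theta)-a|\ge r_a$ for all $\theta$, hence the disk inclusion. Sharpness is then immediate: equality at $\theta=0$ means the circle $|w-a|=r_a$ meets $\partial\Omega_{\sin}$ at the real point $1+\sin 1$ (respectively $1-\sin 1$ after reflection), so no larger concentric disk is inscribable.
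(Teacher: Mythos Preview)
Your argument is correct. Note, however, that the paper does not give its own proof of this lemma: it is quoted verbatim from Cho \emph{et al.}\ (the reference labelled \texttt{cho}) and used as a black box, so there is no in-paper proof to compare against.

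Your route is a clean, self-contained direct proof: reduce by the two symmetries of $\Omega_{\sin}$ (about the real axis and about $\RE w=1$) to the case $a\ge 1$, rewrite the boundary-distance condition as an expression affine in $t=a-1$, observe that the endpoint $t=\sin 1$ gives a trivial square, and reduce the endpoint $t=0$ to the single inequality $\sin^2(\cos\theta)+\sinh^2(\sin\theta)\ge\sin^2 1$, which you dispatch by a monotonicity computation using $\sin(2u)\le 2u$ and $\sinh(2y)\ge 2y$. Two purely cosmetic points: at $y=1$ the factor $y/u$ in your derivative formula has a removable singularity (the product $(y/u)\sin(2u)$ extends continuously to $2$ there), which you should mention; and the sharpness remark at the end, while true, goes beyond what the lemma asserts.
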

\begin{lemma}[{\cite[Lemma 2.1, p.\ 3]{gandhi}}]\label{T7}
 For $ \sqrt{2}-1 < a < \sqrt{2}+1$, let $r_a=  1 - |\sqrt{2}-a|$. Then $\{w: |w-a|< r_a \} \subseteq  \Omega_{\leftmoon}= \{w: |w^2 - 1|< 2 |w|\}$.
\end{lemma}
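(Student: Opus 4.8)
The plan is to reduce the stated set inclusion to a single distance computation. Since $\Omega_{\leftmoon}=\{w:|w^2-1|<2|w|\}$ is an open set and the real number $a$ lies in its interior (indeed, for positive real $x$ one has $|x^2-1|<2x$ exactly when $x\in(\sqrt2-1,\sqrt2+1)$, which is the hypothesis on $a$), the largest open disk $\{|w-a|<r\}$ contained in $\Omega_{\leftmoon}$ has radius equal to the Euclidean distance from $a$ to the boundary curve $\partial\Omega_{\leftmoon}=\{|w^2-1|=2|w|\}$. Thus it suffices to show that this distance equals $r_a=1-|\sqrt2-a|$; the claimed inclusion then follows because the open disk of that radius meets no boundary point and, being connected and containing the interior point $a$, must lie inside $\Omega_{\leftmoon}$.

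The computational heart is a convenient description of the boundary. Writing $w=u+iv$ and $s=|w|^2=u^2+v^2$, I would expand $|w^2-1|^2=|w|^4-2\RE(w^2)+1=s^2-2(u^2-v^2)+1$ and set it equal to $4s$. Using $u^2-v^2=2u^2-s$, this collapses to the clean relation $(s-1)^2=4u^2$, so that on $\partial\Omega_{\leftmoon}$ one has $u=\pm(s-1)/2$. The point of this identity is that the squared distance to the real center becomes $|w-a|^2=(u-a)^2+v^2=s-2au+a^2$, which, after substituting $u=\pm(s-1)/2$, is an \emph{affine} function of $s$ on each of the two arcs. Hence its minimum over each arc is attained at an endpoint of the admissible $s$-range.

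It then remains to identify that range and carry out the minimization. The constraint $v^2=(-s^2+6s-1)/4\ge 0$ forces $s\in[(\sqrt2-1)^2,(\sqrt2+1)^2]$, whose endpoints correspond to the real boundary points $\sqrt2-1$ and $\sqrt2+1$, while the intermediate value $s=1$ gives the crossing points $\pm i$. On the right lobe ($u\ge 0$) the affine distance function has slope $1+a>0$ on the inner arc $s\le 1$ and slope $1-a$ on the outer arc $s\ge 1$, so the only candidate nearest points are $\sqrt2-1$ (distance $a-(\sqrt2-1)$), $\sqrt2+1$ (distance $(\sqrt2+1)-a$), and $\pm i$ (distance $\sqrt{1+a^2}$). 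A short case analysis on the sign of $1-a$, using the elementary estimate $\sqrt{1+a^2}>1\ge\min\{a-(\sqrt2-1),(\sqrt2+1)-a\}$ to discard the points $\pm i$, shows the minimum equals $\min\{a-(\sqrt2-1),(\sqrt2+1)-a\}=1-|\sqrt2-a|=r_a$. Points on the left lobe ($u<0$) are never closer, since flipping the sign of $u$ turns the term $-2au$ from negative to nonnegative and can only increase $|w-a|^2$; this confirms that the global minimum over the full boundary is $r_a$.

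The main obstacle is not any single estimate but the bookkeeping of the minimization: one must correctly track which arc and which endpoint furnishes the true minimum as $a$ ranges over $(\sqrt2-1,\sqrt2+1)$, and in particular verify that the interior crossing points $\pm i$ never beat the real-axis endpoints. The affine-in-$s$ reduction coming from the identity $(s-1)^2=4u^2$ is exactly what makes this routine, collapsing the whole problem to comparing three explicit numbers. As a byproduct, the extremal disk is tangent to $\partial\Omega_{\leftmoon}$ at a real point, so the radius $r_a$ is best possible.
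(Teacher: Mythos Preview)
Your argument is correct. The paper does not prove this lemma at all: it is quoted verbatim from \cite{gandhi} and used as a black box in the radius computations, so there is no ``paper's proof'' to compare with. What you have supplied is a self-contained verification.

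Your reduction is clean and the key identity $(s-1)^2=4u^2$ on $\partial\Omega_{\leftmoon}$ is exactly what makes the distance-squared affine in $s$ on each arc, so that only the endpoints $s=(\sqrt2\pm1)^2$ and the crossover $s=1$ need be checked. The case split is handled properly: for the inner arc the slope $1+a>0$ forces the minimum at $w=\sqrt2-1$; for the outer arc the sign of $1-a$ decides between $w=\pm i$ and $w=\sqrt2+1$; and your inequality $\sqrt{1+a^2}>1\ge\min\{a-(\sqrt2-1),(\sqrt2+1)-a\}$ legitimately rules out $\pm i$. The reflection argument disposing of the left lobe is also valid, since $|w-a|^2=s-2au+a^2$ and $a>0$. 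One small point worth making explicit: having shown $\operatorname{dist}(a,\partial\Omega_{\leftmoon})=r_a$, your connectedness argument for the inclusion uses that $\mathbb{C}$ decomposes as the disjoint union of the two open sets $\{|w^2-1|<2|w|\}$, $\{|w^2-1|>2|w|\}$ and the boundary $\{|w^2-1|=2|w|\}$; this is immediate but is the reason the disk, once it avoids the boundary, cannot straddle both regions.
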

\begin{lemma}[{\cite[Lemma 2.2, p.\ 202]{kumar}}]\label{T8}
 For $ 2 (\sqrt{2}-1) < a < 2$, let $r_a$ be given by
 \begin{align*}
   r_a= \left\{
          \begin{array}{ll}
            a-2(\sqrt{2}-1), & \hbox{$2 (\sqrt{2}-1) < a \leq \sqrt{2}$;} \\
            2- a , & \hbox{$\sqrt{2} \leq a < 2$.}
          \end{array}
        \right.
 \end{align*}
Then $\{w: |w-a|< r_a \} \subseteq  \Omega_R$, where $\Omega_R$ is the image of the unit disk $\mathbb{D}$ under the function $1+ ((z k + z^2)/(k^2 -k z))$, ~~ $k= \sqrt{2}+1$.
\end{lemma}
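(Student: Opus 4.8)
The plan is to identify $r_a$ as the Euclidean distance from the centre $a$ to the boundary curve $\partial\Omega_R$, realised at one of the two real-axis vertices of $\Omega_R$; since $a$ itself is an interior point of $\Omega_R$, the open disk $\{w:|w-a|<r_a\}$ then lies in $\Omega_R$. First I would simplify the defining map: writing $k=\sqrt2+1$ and combining fractions,
\[
 h_R(z)=1+\frac{kz+z^2}{k^2-kz}=\frac{k^2+z^2}{k(k-z)}.
\]
Because $h_R$ has real coefficients, $\Omega_R=h_R(\mathbb{D})$ is symmetric about the real axis, and the images of $z=\pm1$ are its extreme real points: one checks $h_R(1)=2$ and $h_R(-1)=2(\sqrt2-1)$. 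These are exactly the endpoints of the interval $(2(\sqrt2-1),2)$ in which $a$ lives, and $h_R$ carries the real diameter $(-1,1)$ monotonically onto this open segment, so every admissible $a$ is an interior point of $\Omega_R$.

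Next I would parametrise the boundary by $w=h_R(e^{i\theta})$ and record the squared distance to $a$ as a function of $t=\cos\theta\in[-1,1]$. From
\[
 h_R(e^{i\theta})-a=\frac{e^{2i\theta}+ak\,e^{i\theta}+k^2(1-a)}{k\,(k-e^{i\theta})},
\]
expanding the modulus of the numerator and using $|k-e^{i\theta}|^2=(k^2+1)-2kt$ gives
\[
 D(t):=\bigl|h_R(e^{i\theta})-a\bigr|^2=\frac{A\,t^2+B\,t+C}{k^2\bigl[(k^2+1)-2kt\bigr]},
\]
where $A=4k^2(1-a)$, $B=2ak\bigl[1+k^2(1-a)\bigr]$ and $C=1+a^2k^2+k^4(1-a)^2-2k^2(1-a)$, and the denominator is strictly positive on $[-1,1]$. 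The two vertices correspond to $t=1$ (value $(2-a)^2$) and $t=-1$ (value $(a-2(\sqrt2-1))^2$); comparing these shows the left vertex is the nearer of the two precisely when $a\le\sqrt2$, matching the case split in the statement. Thus the lemma reduces to the single inequality $D(t)\ge r_a^2$ for all $t\in[-1,1]$, which simultaneously yields the inclusion and its sharpness, since $r_a=\operatorname{dist}(a,\partial\Omega_R)$.

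To establish this I would clear the positive denominator and study the quadratic
\[
 Q(t):=\bigl(A\,t^2+B\,t+C\bigr)-r_a^2\,k^2\bigl[(k^2+1)-2kt\bigr].
\]
By the vertex values, $t=1$ is a root of $Q$ when $r_a=2-a$ and $t=-1$ is a root when $r_a=a-2(\sqrt2-1)$, so $Q$ factors as $A(t\mp1)(t-t_0)$ with one explicitly known root. The inequality $Q(t)\ge0$ on $[-1,1]$ then reduces to showing that the second root $t_0$ lies outside the open interval $(-1,1)$ on the correct side, the side being dictated by the sign of the leading coefficient $A=4k^2(1-a)$ (so whether $a\lessgtr1$) together with the factor $(t\mp1)$. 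The borderline $a=\sqrt2$ is the clean anchor: there both vertices are equidistant, $Q(t)=A(t^2-1)$, and $Q\ge0$ on $[-1,1]$ with equality only at $t=\pm1$. The degenerate case $a=1$, where $A=0$ and $Q$ is linear, is checked directly.

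The main obstacle is precisely this root-location step: computing $t_0$ in closed form in terms of $a$ and $k$ and verifying, for every $a$ in each sub-range, that it leaves $[-1,1]$ on the required side so that $Q$ keeps the correct sign. Care is needed so that equality in $D(t)\ge r_a^2$ holds exactly at the near vertex (guaranteeing $r_a$ is the true distance, hence sharp) while the inequality is strict at interior $t$; the switch of the active vertex at $a=\sqrt2$, and the sign change of $A$ at $a=1$, are the two places where the algebra must be handled attentively. Once $t_0$ is pinned down, $\{w:|w-a|<r_a\}\subseteq\Omega_R$ follows at once.
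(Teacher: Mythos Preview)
The paper does not prove this lemma; it is quoted from \cite[Lemma~2.2]{kumar} and used as a black box in the subsequent radius computations. There is therefore no in-paper argument to compare your proposal against.

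On its own merits your outline is the standard route for such disk-inclusion lemmas and is essentially correct. The simplification $h_R(z)=(k^{2}+z^{2})/(k(k-z))$, the vertex values $h_R(1)=2$ and $h_R(-1)=2(\sqrt2-1)$, and the reduction of $|h_R(e^{i\theta})-a|^{2}$ to a rational function of $t=\cos\theta$ with quadratic numerator all check out (the quartic and cubic terms in $t$ cancel, as you implicitly assume). Your key observation that one of $t=\pm1$ is always a root of $Q$, so that $Q(t)=A(t\mp1)(t-t_0)$, is correct and reduces everything to a root-location check; the degenerate linear case $a=1$ and the sign flip of $A$ at $a=1$ are indeed the only delicate spots. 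What remains is a routine but necessary computation you have not carried out: writing $t_0$ explicitly and verifying it lies outside $(-1,1)$ on the correct side throughout each sub-range of $a$. One point you leave implicit is that $h_R$ is univalent on $\mathbb{D}$, so that $\partial\Omega_R=h_R(\partial\mathbb{D})$ and your distance-to-boundary argument is legitimate; this is part of the Ma--Minda framework in \cite{kumar} and should be cited rather than assumed.
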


\begin{lemma}[{\cite[Lemma 3.2, p.\ 10]{sumit2}}]\label{T9}
 For $ 0 < a < \sqrt{2}$, let $r_a$ be given by
 \begin{align*}
   r_a= \left\{
          \begin{array}{ll}
            a, & \hbox{$0 < a \leq \sqrt{2}/3$;} \\
           \big((1- (\sqrt{2}-a)^2)^{1/2}-(1-(\sqrt{2}-a)^2)\big)^{1/2}, & \hbox{$\sqrt{2}/3 \leq a < \sqrt{2}$.}
          \end{array}
        \right.
 \end{align*}
Then $\{w: |w-a|< r_a \} \subseteq  \{ w: \RE w >0, |(w- \sqrt{2})^2-1|< 1 \}= \Omega_{RL}$.
\end{lemma}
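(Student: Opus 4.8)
The plan is to realize $r_a$ as the distance from the center $a$ to the boundary of $\Omega_{RL}$ and to compute that distance explicitly. Observe first that $\Omega_{RL}$ is the intersection of the right half-plane $\{\RE w>0\}$ with the interior of the Bernoulli lemniscate $\{|(w-\sqrt2)^2-1|<1\}$, whose node sits at $w=\sqrt2$ and whose left loop has its tip at $w=0$; for $0<a<\sqrt2$ the center lies on the real-axis segment inside that left loop. Hence the only boundary pieces that can bind are the imaginary axis, at distance exactly $a$ from the center, and the lemniscate arc, at some distance $d(a)$. Since the center is an interior point, every open disk about $a$ of radius at most $\min(a,d(a))$ meets neither boundary piece and therefore lies in $\Omega_{RL}$; so it suffices to show that the stated $r_a$ equals $\min(a,d(a))$.

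The heart of the matter is computing $d(a)$. Writing $w-\sqrt2=X+iY$, the lemniscate equation $(X^2-Y^2-1)^2+4X^2Y^2=1$ simplifies to the clean relation $(X^2+Y^2)^2=2(X^2-Y^2)$. Setting $u=X^2+Y^2\in[0,2]$ this gives $X^2=\tfrac14 u(u+2)$ and $Y^2=\tfrac14u(2-u)$, so that with $c:=\sqrt2-a>0$ the squared distance $|w-a|^2=(X+c)^2+Y^2$ reduces, on the relevant left-loop branch $X<0$, to the single-variable function
\[
 D(u)=u+c^2-c\sqrt{u(u+2)},\qquad u\in[0,2].
\]
A direct differentiation gives $D'(u)=1-c(u+1)/\sqrt{u(u+2)}$, whose unique interior zero is $u^\ast=(1-c^2)^{-1/2}-1$; substituting back yields $D(u^\ast)=\sqrt{1-c^2}-(1-c^2)$.

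It remains to minimize and assemble the cases. The critical point $u^\ast$ lies in $(0,2)$ precisely when $c<2\sqrt2/3$, i.e.\ $a>\sqrt2/3$; there $d(a)^2=\sqrt{1-c^2}-(1-c^2)$, which is exactly the square of the second-branch value of $r_a$ with $c=\sqrt2-a$. When $a\le\sqrt2/3$ one checks $D'\le0$ on $(0,2)$, so the minimum is attained at the endpoint $u=2$, that is at the tip $w=0$, giving $d(a)=\sqrt{D(2)}=a$; combined with the axis distance $a$ this forces $r_a=a$, the first-branch value. The step I expect to be most delicate is the geometric reduction to $\min(a,d(a))$: one must verify that the candidate disk does not escape through a non-nearest part of the boundary and, on the threshold range $a\le\sqrt2/3$, that the disk of radius $a$ tangent to the imaginary axis at the origin stays inside the left loop --- a tangency/curvature comparison at $w=0$ which is exactly what pins the split value $\sqrt2/3$. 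One also has to confirm that the branch choice $X<0$ realizes the global (not merely local) minimum over the whole lemniscate, and that $d(a)\le a$ throughout the second range so that $r_a=\min(a,d(a))=d(a)$ there.
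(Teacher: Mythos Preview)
The paper does not give a proof of this lemma; it is quoted verbatim from \cite{sumit2} and used as a black box in the radius computations. So there is no argument in the present paper to compare against, and your proposal stands as an independent proof.

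Your computation is essentially correct and is the natural one. A few remarks that tighten it. First, the decomposition of $\partial\Omega_{RL}$ into ``imaginary axis'' and ``lemniscate'' is slightly misleading: the left loop of $|(w-\sqrt2)^2-1|=1$ lies entirely in $\{\RE w\ge0\}$, meeting the imaginary axis only at its tip $w=0$; hence the condition $\RE w>0$ is redundant and the boundary of $\Omega_{RL}$ is just the lemniscate. Your ``axis distance $a$'' is therefore nothing other than the distance to the tip, i.e.\ the endpoint value $D(2)=(\sqrt2-c)^2=a^2$ in your own parametrisation, so no separate curvature or tangency argument at $w=0$ is needed. Second, the right loop is disposed of immediately: every point on it has real part at least $\sqrt2$, so its distance from $a$ is at least $c=\sqrt2-a$, and one checks $c\ge d(a)$ in both ranges (for $a\le\sqrt2/3$ this is $c\ge a$, and for $a>\sqrt2/3$ it follows since $D(0)=c^2\ge D(u^\ast)$ by minimality). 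Finally, $d(a)\le a$ in the second range is automatic because $D(u^\ast)\le D(2)=a^2$ whenever $u^\ast$ is the interior minimiser. With these observations the ``delicate'' steps you flagged all collapse, and the argument is complete.
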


\begin{lemma}[{\cite[Lemma 2, p.\ 240]{shah}}]\label{T10}
 If $ p(z) = 1 + b_n z^n + b_{n+1} z^{n+1} + \cdots $ is analytic and satisfies $ \RE p(z) > \alpha$, $0 \leq \alpha < 1$,  for $|z|< 1$, then
\begin{equation*}
  \left| \frac{zp'(z)}{p(z)}\right| \leq \frac{2 n z^n(1-\alpha)}{(1- |z|^n)(1+ (1-2 \alpha)|z|^n)}.
\end{equation*}
\end{lemma}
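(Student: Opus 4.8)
The plan is to reduce the estimate to the Schwarz--Pick lemma after expressing $p$ through a Schwarz function. First I would normalize by setting $q(z)=(p(z)-\alpha)/(1-\alpha)$, so that $q(0)=1$, $\RE q(z)>0$ on $\mathbb{D}$, and the vanishing of the low-order Taylor coefficients is inherited: $q(z)=1+\frac{b_n}{1-\alpha}z^n+\cdots$. Since $q$ is a Carath\'eodory function it admits the representation $q=(1+w)/(1-w)$ for some analytic $w:\mathbb{D}\to\mathbb{D}$ with $w(0)=0$, and the absence of the coefficients of $z,\dots,z^{n-1}$ in $q$ forces $w(z)=O(z^n)$. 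Applying the maximum modulus principle to $w(z)/z^n$, which is then a self-map of $\mathbb{D}$, gives the pointwise bound $|w(z)|\le|z|^n$ throughout $\mathbb{D}$.

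Next I would turn $zp'/p$ into an expression in $w$ alone. From $p=\alpha+(1-\alpha)q$ and $p'=(1-\alpha)q'$, together with the elementary identities $q'=2w'/(1-w)^2$ and $\alpha+(1-\alpha)q=(1+(1-2\alpha)w)/(1-w)$, a direct computation gives
\[
  \frac{zp'(z)}{p(z)}=\frac{2(1-\alpha)\,zw'(z)}{(1-w(z))(1+(1-2\alpha)w(z))}.
\]
This isolates precisely the quantities to be controlled: the numerator $zw'$ and the two factors linear in $w$ in the denominator.

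The heart of the proof, and the step I expect to be the main obstacle, is estimating this right-hand side sharply, since bounding the two denominator factors separately by the triangle inequality is too lossy and yields the wrong constant (in particular the correct denominator carries $+(1-2\alpha)|z|^n$ rather than $-|1-2\alpha|\,|z|^n$). I would instead invoke the refined Schwarz--Pick inequality for a self-map vanishing to order $n$ at the origin,
\[
  |w'(z)|\le\frac{n\,|z|^{n-1}(1-|w(z)|^2)}{1-|z|^{2n}},
\]
which I would establish by writing $w(z)=z^n\psi(z)$ with $\psi$ a self-map of $\mathbb{D}$, combining $w'=nz^{n-1}\psi+z^n\psi'$ with the ordinary Schwarz--Pick bound $|\psi'(z)|\le(1-|\psi(z)|^2)/(1-|z|^2)$, and verifying an elementary one-variable inequality (with equality exactly when $|\psi|\equiv 1$, i.e. $w(z)=cz^n$).

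Substituting this into the identity and writing $\rho=|z|^n$ and $t=|w(z)|\le\rho$, the claimed bound reduces to the elementary inequality
\[
  (1-t^2)(1+(1-2\alpha)\rho)\le(1+\rho)\,|1-w|\,|1+(1-2\alpha)w|.
\]
I would prove this by first minimizing the right-hand side over $\arg w$ at fixed $t$; a short computation of the governing quadratic in $\cos(\arg w)$ shows the minimum always occurs at $w=t$ real and positive, where the product equals $(1-t)(1+(1-2\alpha)t)$. The inequality then reduces to $(1+t)(1+(1-2\alpha)\rho)\le(1+\rho)(1+(1-2\alpha)t)$, which rearranges to $2\alpha(\rho-t)\ge0$ and so holds because $\alpha\ge0$ and $t\le\rho$. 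Tracking the equality cases shows every inequality is tight for $w(z)=z^n$, i.e. for $p_0(z)=(1+(1-2\alpha)z^n)/(1-z^n)$, which simultaneously confirms that the estimate is sharp.
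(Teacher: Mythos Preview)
The paper does not supply its own proof of this lemma; it is quoted verbatim from Shah's paper and used as a black box in the subsequent radius computations. So there is no in-paper argument to compare against.

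Your proposed proof is correct and self-contained. The representation $p=(1+(1-2\alpha)w)/(1-w)$ with $w$ a Schwarz function vanishing to order $n$, the resulting identity
\[
\frac{zp'(z)}{p(z)}=\frac{2(1-\alpha)\,zw'(z)}{(1-w(z))(1+(1-2\alpha)w(z))},
\]
and the reduction to a pointwise inequality in $\rho=|z|^n$ and $t=|w(z)|$ are all sound. The two places that deserve a word of caution are: (i) the refined Schwarz--Pick bound $|w'(z)|\le n|z|^{n-1}(1-|w(z)|^2)/(1-|z|^{2n})$ is true, but your sketch via $w=z^n\psi$ plus the triangle inequality and ordinary Schwarz--Pick on $\psi$ does require checking the residual inequality $ns+r(1-s^2)/(1-r^2)\le n(1-r^{2n}s^2)/(1-r^{2n})$; this is genuinely elementary (factor out $(1-s)$ and pair the terms $r^{2k+1}$ with $r^{2n-2k-1}$, using $(1-r^j)(1-r^{2n-j})\ge0$), but do spell it out; (ii) in the minimization of $|1-w|\,|1+(1-2\alpha)w|$ over $\arg w$, when $\alpha>1/2$ the governing quadratic in $\cos\theta$ is convex rather than concave, so the minimum is not automatically at an endpoint---it still lands at $w=t$ because the vertex satisfies $x^*\ge1$ (the discriminant computation gives $4\beta(\beta+1)^2\le0$ for $\beta=1-2\alpha\in(-1,0)$), and you should record this case explicitly. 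With those two points filled in, your argument is complete and yields sharpness via $p_0(z)=(1+(1-2\alpha)z^n)/(1-z^n)$ exactly as you say.
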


With all these tools, we are ready to give the proof of our first result.

\begin{proof}[Proof of Theorem~\ref{th1}]
Let $f \in \mathcal{K}_1$ and the function $g: \mathbb{D} \rightarrow \mathbb{C}$ be chosen such that
\begin{equation}\label{1}
  \RE {\frac{f(z)}{g(z)}}>0 \quad \text{and} \quad \RE{\Big( \frac{1-z^2}{z} g(z) \Big)}>0 \quad (z \in \mathbb{D}).
\end{equation}
Let us define $p_1, p_2: \mathbb{D}\rightarrow \mathbb{C}$ as
\begin{equation}\label{2}
  p_1(z)= \frac{1-z^2}{z} g(z) \quad \text{and} \quad p_2(z)= \frac{f(z)}{g(z)}
\end{equation}
Therefore, by equation \eqref{1}, 
$p_1$ and $p_2$ are in $\mathcal{P}$. Equation \eqref{2} yields
\begin{equation*}
  f(z) = \frac{z}{(1-z^2)} p_1(z) p_2(z).
\end{equation*}
Take logarithm at both sides and differentiate with respect to $z$ would give
\begin{equation}\label{3}
  \frac{z f'(z)}{f(z)}= \frac{1+z^2}{1-z^2}+ \frac{z p_1'(z)}{p_1(z)}+ \frac{z p_2'(z)}{p_2(z)}.
\end{equation}
It can be easily proved that the bilinear transform $w= (1+z^2)/(1-z^2)$ maps the disk $|z|\leq r$ onto the disk
\begin{equation}\label{4}
  \left| \frac{1+z^2}{1-z^2}- \frac{1+r^4}{1-r^4} \right| \leq \frac{2r^2}{1-r^4}.
\end{equation}
Now, by Lemma \ref{T10}, for $p \in \mathcal{P}(\a) := \{ p \in \mathcal{P} : \RE p(z) > \a, z \in \mathbb{D}\}$, we have
\begin{equation}\label{5}
  \left| \frac{z p'(z)}{p(z)}\right|  \leq \frac{2 (1- \a) r}{(1-r)\big(1+ (1-2\a)r\big)} \quad  (|z|\leq r).
\end{equation}
By using equations \eqref{3}, \eqref{4} and \eqref{5}, we can conclude that a function $f \in \mathcal{K}_1$ maps the disk $|z|\leq r$ onto the disk
\begin{equation}\label{6}
  \left| \frac{z f'(z)}{f(z)} - \frac{1+r^4}{1-r^4} \right| \leq  \frac{2r (2 r^2 + r + 2)}{1-r^4}.
\end{equation}
In order to solve radius problems for $f \in \mathcal{K}_1$, we are interested in computing the value of $r$ for which the disk in \eqref{6} is contained in the corresponding regions.
The classes we are considering here are all subclasses of starlike functions and therefore, we first determine the radius of starlikeness for $f \in \mathcal{K}_1$. From \eqref{6}, we have
\begin{equation*}
  \RE \frac{z f'(z)}{f(z)} \geq \frac{r^4 - 4 r^3 - 2 r^2 -4r + 1}{1-r^4} \geq 0.
\end{equation*}
Solving the above inequality for $r$, we get that the function $f \in \mathcal{K}_1$ is starlike in $|z|\leq 0.216845$. Hence, all the radii that we are going to estimate here, will be less than 0.216845. For the function $f_1$ defined in \eqref{f1}, we have
\begin{align*}
 \frac{zf'_1(z)}{f_1(z)} & = \frac{1+ 4 i z +2 z^2 -4 i z^3 +z^4}{1- z^4}\\
                     & = \frac{1+ 4 i z(1- z^2) + 2 z^2 + z^4 }{1 - z^4}
\end{align*}
At $z := r i = (0.216845) i $, we have $zf'_1(z)/f_1(z)\approx 0$, thereby proving that the radius of starlikeness obtained for the class $\mathcal{K}_1$ is sharp.
\begin{enumerate}
  \item In order to compute $R_ {\mathcal{S}^*(\a)}$, we estimate the value of $r \in (0,1)$ satisfying
   \begin{equation*}
  \RE \frac{z f'(z)}{f(z)} \geq \frac{r^4 - 4 r^3 - 2 r^2 -4r + 1}{1-r^4} \geq \a.
\end{equation*}
      Therefore, the number $r = R_ {\mathcal{S}^*(\a)}$, is the smallest positive real root of the equation $r^4(1+\a) - 4 r^3-2 r^2 -4 r +(1-\a)=0$ in $(0,1)$. For the function $f_1 \in \mathcal{K}_1$ given by \eqref{f1}, we have
\begin{align} \label{f11}
  \frac{z f_1'(z)}{f_1(z)} & = \frac{1+ 4 i z +2 z^2 -4 i z^3 +z^4}{1- z^4}
\end{align}
At $z:= r i = \mathcal{R}_{\mathcal{S}^*(\alpha)}$,  \eqref{f11} reduces to
\begin{align*}
  \frac{z f_1'(z)}{f_1(z)} & = \frac{1- 4 r - 2 r^2 -4  r^3 +r^4}{1- r^4}
= \alpha,
\end{align*}
thereby proving that the radius is sharp.
\item We use lemma \ref{T1} to compute the lemniscate starlike radius for the function $f \in \mathcal{K}_1$.
    Let $a = (1+r^4)/(1-r^4)$. Then for $0 \leq r < 1$, we have $a \geq 1$. So for $a < \sqrt{2}$, we get $r < \sqrt[4]{(\sqrt{2}-1)/(\sqrt{2}+2)} \approx 0.59018$. On the other hand, consider
    \[
    \frac{2r(2r^2 + r + 2)}{1-r^4} \leq \sqrt{2} - a = \sqrt{2} - \frac{1+r^4}{1 - r^4}.
    \]
    From this, let $r^*$ be the smallest positive real roof of the equation $(1+\sqrt{2})r^4 + 4r^3 + 2r^2 + 4r + (1 - \sqrt{2}) = 0$. Then the radius of lemniscate starlikeness for $f\in \mathcal{K}_1$ is
    \[
    R_{\mathcal{S}^*_L} = \min\left\{ \left( \frac{\sqrt{2}-1}{\sqrt{2}+2}\right)^{1/4}, r^*\right\} = r^* = \frac{\sqrt{5}-2}{\sqrt{2}+1}.
    \]

The radius obtained is sharp. Consider the functions $f, g: \mathbb{D}\rightarrow \mathbb{C}$ defined by
\begin{equation} \label{f_1}
  f(z) = \frac {z(1-z)}{(1+z)^3} \quad \quad ~~~ \quad \text{and}~~~ \quad \quad g(z)= \frac{z}{(1+z)^2}.
\end{equation}
Then clearly $f \in \mathcal{K}_1$ as
\begin{equation*}
  \RE  \frac{f(z)}{g(z)}  = \RE  \frac{1-z^2}{z}g(z)  = \RE  \frac{1+  z}{1-  z} >0.
\end{equation*}

Now, for $z := - r^* = - R_{\mathcal{S}^*_L}$, we have $(z^2 -4 z +1)/(1-z^2) = \sqrt{2}$ and thus
\begin{equation*}
\left| \left(\frac{zf'(z)}{f(z)}\right)^2 -1  \right| = \left| \left(\frac{r^2 -4r+1}{1-r^2}\right)^2 -1 \right|=1,
\end{equation*}
thereby proving that the radius obtained is sharp by the function $f$ in \eqref{f_1}.

\item We use Lemma \ref{T2} to compute the parabolic starlike radius for $f \in \mathcal{K}_1$. Again, let $a = (1+r^4)/(1-r^4)$, which is larger than or equal to $1$ for $0 \leq r < 1$. Note that
    \[
    a = \frac{1 + r^4}{1 - r^4} = \frac{3}{2} \quad \Leftrightarrow \quad r= \left(\frac{1}{5} \right)^{1/4} \approx 0.66874.
    \]
    Since the radius we are looking for would be less than $0.216845$, we only consider the case $1/2 < a \leq 3/2$ in Lemma \ref{T2}. So when considering
    \[
    \frac{2r(2r^2 + r + 2)}{1-r^4} \leq \frac{1 + r^4}{1-r^4} - \frac{1}{2},
    \]
    let $r^*$ be the smallest positive real root of the equation $3 r^4-8 r^3 -4 r^2 -8 r +1= 0$. Then the radius of parabolic starlikeness for $f\in \mathcal{K}_1$ is
    \[
    R_{\mathcal{S}_P} = \min\left\{ \left(\frac{1}{5}\right)^{1/4} , r^*\right\} = r^* \approx 0.116675.
    \]

\indent We see that the sharpness follows for the function $f_1 \in \mathcal{K}_1$ defined in \eqref{f1}. At $z = ir$, we have
\begin{equation*}
  F(r) = \frac{z f_1'(z)}{f_1(z)}\bigg|_{z = ir}  = \frac{1- 4r - 2 r^2 -4  r^3 +r^4}{1- r^4}.
\end{equation*}
Then,
\begin{align*}
\left| F(r)- 1  \right| &= \left| \frac{2r (r^3-2r^2-r-2)}{1-r^4}\right|.
\end{align*}
For $z := ir^* = iR_{\mathcal{S}_{P}}$, we have
\begin{align*}
  \RE  \frac{z f'_1(z)}{f_1(z)}  &= \frac{1+ r^4 - 4 r^3 -2 r^2- 4r}{1- r^4} (\approx 0.5)\\
&= \frac{2 r (2+r+2r^2-r^3)}{1 -r^4}=  \left| \frac{zf_1'(z)}{f_1(z)}- 1  \right|.
\end{align*}
Thus the radius obtained is sharp for the function $f_1$.

\item By using  Lemma \ref{T3} and the argument similar to the above, we get that the exponential starlike radius $R_{\mathcal{S}^*_e}$ for the class $\mathcal{K}_1$ is the smallest positive real root of the equation $(4 r^3 +2 r^2 + 4 r  -1 -r^4)e = r^4 -1$.

The radius is sharp for the function $f_1$ defined in \eqref{f1}.
For $z := ir = i{R}_{\mathcal{S}^*_e}$, we have
\begin{equation*}
  \left|\log \frac{zf'_1(z)}{f_1(z)} \right| = \left| \log \frac{1 + r^4 - 4r^3 - 2 r^2 - 4r}{1 - r^4}\right|=1.
\end{equation*}

\item
By using Lemma \ref{T4}, and similar argument as before,
the $\mathcal{S}^*_c$-radius for the class $\mathcal{K}_1$ is the smallest positive real root of the equation $2 r^4 - 6 r^3 - 3 r^2 - 6 r +1 = 0$.

The radius is sharp for the function $f_1$ defined in \eqref{f1}.
Indeed, for the function $f_1$ defined in \eqref{f1}, we have
at $z := ir = i \mathcal{R}_{\mathcal{S}^*_c}$,
\begin{equation*}
    \frac{zf'_1(z)}{f_1(z)} =  \frac{1 + r^4 - 4r^3 - 2 r^2 - 4r}{1 - r^4} =\frac{1}{3}=h_c(-1)\in \partial h_c(\mathbb{D}),
\end{equation*}
where $h_c(z) = 1 + (4/3)z + (2/3)z^2$.
This shows that the result is sharp.

\item To determine the $\mathcal{S}^*_{\leftmoon}$-radius, ${R}_{\mathcal{S}^*_{\leftmoon}}$, we will use Lemma \ref{T7}. After some computations following the idea above, it can be shown that ${R}_{\mathcal{S}^*_{\leftmoon}}$ is the smallest positive real root of the equation $4 r^3 + 2 r^2 + 4r = 2 -\sqrt{2}(1-r^4)$.

The radius is sharp for the function $f_1$ defined in \eqref{f1}, since at $z := ir = i{R}_{\mathcal{S}^*_{\leftmoon}}$, we have
\begin{align*}
  \left|  \left( \frac{zf_1'(z)}{f_1(z)}\right)^2-1 \right| &= \left| \left( \frac{1+ r^4 - 4 r^3 - 2 r^2 - 4 r }{1-r^4}\right)^2 -1 \right| (\approx 0.134993 )\\&= 2\left| \frac{1+ r^4 - 4 r^3 - 2 r^2 - 4 r }{1-r^4} \right| = 2 \left|\frac{zf_1'(z)}{f_1(z)} \right|.
\end{align*}

\item In order to find the $\mathcal{S}^*_{\sin}$-radius for function $f \in \mathcal{K}_1$, we make use of Lemma \ref{T5}. Similarly as above, with $a = {(1+r^4)}/{(1-r^4)} >1$, it can be shown by arguing similarly as above that the $\mathcal{S}^*_{\sin}$-radius is the smallest positive real root of the equation $(2 + \sin{1})r^4 + 4r^3 + 2r^2 + 4r - \sin{1} = 0$.

The radius is sharp for the function $f_1$ defined in \eqref{f1}. 

\item In order to compute the $\mathcal{S}^*_{RL}$- radius for the class $\mathcal{K}_1$, we use Lemma \ref{T9}. As $\sqrt{2}/3  \leq a = {(1+ r^4)}/{(1- r^4)} < \sqrt{2}$, a computation using Lemma \ref{T9} shows that the $\mathcal{S}^*_{RL}$- radius is the smallest positive real root of the equation
    \[
    4r^2(2r^2 + r + 2)^2 = (1-r^4)\sqrt{\left(\sqrt{2}-1\right) + \left(\sqrt{2}-2\right)r^4} - 2\left( \sqrt{2}-1 + (\sqrt{2}-2)r^4\right).
    \]

The radius obtained is sharp for the function $ f \in \mathcal{K}_1$ given by \eqref{f_1}. 

At $ z := - r  = - {R}_{\mathcal{S}^*_{RL}}$, we have $(z^2 -4 z +1)/(1-z^2) = \sqrt{2}$ and therefore,
\begin{equation*}
   \left| \left( \frac{zf'(z)}{f(z)} - \sqrt{2} \right)^2 -1 \right| = \left| \left( \frac{1- 4z + z^2}{1- z^2}  - \sqrt{2}  \right)^2 -1 \right| = 1.
\end{equation*}
Hence the result.

\item   Since $2 (\sqrt{2}-1) < a = {(1+ r^4)}/{(1- r^4)}\leq \sqrt{2}$, by using Lemma \ref{T8}, it can be shown that the $\mathcal{S}_R$- radius is obtained by solving the equation
     \begin{equation*}
     \left( 2\sqrt{2} - 1\right)r^4 - 4r^3 - 2r^2 - 4r + \left( 3 - 2\sqrt{2}\right)=0.
     \end{equation*}

The radius is sharp for the function $f_1$ defined in \eqref{f1}.

Indeed, for the function $f_1$ defined in \eqref{f1}, we have
at $z := ir = i {R}_{\mathcal{S}^*_R}$ that
\begin{equation*}
    \frac{zf'_1(z)}{f_1(z)} =  \frac{1 + r^4 - 4r^3 - 2 r^2 - 4r}{1 - r^4} = 2\sqrt{2} -2 =h_R(-1)\in \partial h_R(\mathbb{D}).
\end{equation*}
Here, $h_R = 1+ (zk + z^2)/(k^2 - kz)$, and $k = \sqrt{2} + 1$. 
\qedhere
\end{enumerate}
\end{proof}

Our next result gives various radii of starlikeness for the $\mathcal{K}_2$, which consists of functions $f \in \mathcal{A}$ satisfying $|(f(z)/g(z))-1|<1$ for some $g \in \mathcal{A}$ and $\RE  ((1-z^2)g(z)/z) >0$.
Consider the functions $f_2$, $g_2: \mathbb{D}\rightarrow \mathbb{C}$ defined by
\begin{equation} \label{f2}
  f_2(z) = \frac{z(1+ i z)^2}{(1-z^2)(1-i z)} \quad \quad ~~~ \quad \text{and}~~~ \quad \quad g_2(z)= \frac{z (1+ i z)}{(1-z^2) (1- i z)}.
\end{equation}
Clearly,
\begin{equation*}
 \left|  \frac{f_2(z)}{g_2(z)} - 1 \right| = |i z| = |z|< 1~~~~\quad  \text{and}~~~~\quad \RE  \frac{1-z^2}{z}g_2(z)  = \RE  \frac{1+ i z}{1- i z} >0.
\end{equation*}
Therefore, the function $f_2$ is in $\mathcal{K}_2$ and this shows  $\mathcal{K}_2 \neq \phi$. Note that this function $f_2$ would serve as an extremal function for several radii-problems that we study here.
\begin{theorem}
  For $f \in \mathcal{K}_2$, the following results hold:
\begin{enumerate}
  \item The sharp $\mathcal{S}^*(\a)$ radius is the smallest positive real root of the equation $ \a r^4 -3r (r^2 + r + 1) + (1- \a)=0$, \quad $0 \leq \a <1$.
  \item The $\mathcal{S}^*_L$ radius is ${R}_{\mathcal{S^*_L}}= (\sqrt{2}-1)/(\sqrt{2}+2) \approx 0.12132$.
  \item The sharp $\mathcal{S}_{P}$ radius is the smallest positive real root of the equation $ 6 r^3 + 6 r^2 + 6 r -1  - r^4   = 0$ i.e., ${R}_{\mathcal{S}_{P}} \approx 0.1432698$.
  \item The sharp $\mathcal{S}^*_{e}$ radius is the smallest positive real root of the equation $(3 r^3 + 3 r^2 + 3 r -1)e + 1 - r^4 = 0$ i.e.,  ${R}_{\mathcal{S}^*_{e}} \approx 0.174887$.
  \item The sharp $\mathcal{S}^*_{c}$ radius is the smallest positive real root of the equation $  9 r^3 + 9 r^2 + 9 r - 2 - r^4 = 0 $ i.e., ${R}_{\mathcal{S}^*_{c}} \approx 0.182815$.

  \item The sharp $\mathcal{S}^*_{\leftmoon}$ radius is the smallest positive real root of the equation $ r^4 (1- \sqrt{2}) + 3 r^3 + 3 r^2 + 3 r = 2 - \sqrt{2}$ i.e., ${R}_{\mathcal{S}^*_{\leftmoon}} \approx 0.164039$.
 \item The sharp $\mathcal{S}^*_{\sin}$ radius is ${R}_{\mathcal{S}^*_{\sin}}= \sin{1}/(3+ \sin{1}) \approx 0.219049$.
 \item The sharp $\mathcal{S}^*_{R}$ radius is the smallest positive real root of the equation $ 2 r^4 + 3 r^3 + 3 r^2 +  3 r  - 3 + 2 \sqrt{2}(1- r^4) =0  $ i.e., ${R}_{\mathcal{S}^*_{R}} \approx 0.0541073$.
  \item The $\mathcal{S}^*_{RL}$ radius is ${R}_{\mathcal{S}^*_{RL}} \approx 0.0870259$.
\end{enumerate}
\end{theorem}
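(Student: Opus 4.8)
The plan is to follow the proof of Theorem~\ref{th1} almost verbatim; the one new ingredient is how the hypothesis $|f(z)/g(z)-1|<1$ is recast. Fix $f\in\mathcal K_2$ with a witnessing $g\in\mathcal A$ and put $p_1(z)=(1-z^2)g(z)/z$ and $p_2(z)=f(z)/g(z)$. Then $p_1\in\mathcal P$ by definition of $\mathcal K_2$; and since $p_2(0)=1$, $p_2(\mathbb D)\subseteq\{|w-1|<1\}$, and the Möbius map $w\mapsto 1/w$ carries $\{|w-1|<1\}$ onto $\{\RE w>1/2\}$, the function $1/p_2$ lies in $\mathcal P(1/2)$. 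Writing $f(z)=\frac{z}{1-z^2}p_1(z)p_2(z)$ and differentiating $\log f$ exactly as in \eqref{3} gives
\[
  \frac{zf'(z)}{f(z)}=\frac{1+z^2}{1-z^2}+\frac{zp_1'(z)}{p_1(z)}+\frac{zp_2'(z)}{p_2(z)}.
\]

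Next I would estimate the three summands on $|z|\le r$. The first lies in the disk \eqref{4}; Lemma~\ref{T10} with $\alpha=0$ bounds the second by $2r/(1-r^2)$; and Lemma~\ref{T10} applied to $1/p_2\in\mathcal P(1/2)$ (so $\alpha=1/2$, whereupon the factor $1+(1-2\alpha)r$ becomes $1$), together with $zp_2'/p_2=-z(1/p_2)'/(1/p_2)$, bounds the third by $r/(1-r)$. Summing and clearing to the common denominator $1-r^4$ gives, in the spirit of \eqref{6},
\[
  \left|\frac{zf'(z)}{f(z)}-\frac{1+r^4}{1-r^4}\right|\le\frac{r\bigl(r^3+3r^2+3r+3\bigr)}{1-r^4}\qquad(|z|\le r),
\]
hence $\RE\bigl(zf'(z)/f(z)\bigr)\ge(1-3r-3r^2-3r^3)/(1-r^4)$. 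Requiring the latter to be $\ge\alpha$ is precisely $\alpha r^4-3r(r^2+r+1)+(1-\alpha)=0$, which is part (1); with $\alpha=0$ this bounds every radius below by the starlikeness radius, so in particular $1\le a<\sqrt2$ on the relevant range, where $a:=(1+r^4)/(1-r^4)$.

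For parts (2)--(9) I would feed the displayed disk, with centre $a$, into the region lemmas. Since $1\le a<\sqrt2$, the active branch of each is forced: Lemma~\ref{T1} with $r_a=\sqrt2-a$; Lemmas~\ref{T2} and~\ref{T3} on their left branches $a-\tfrac12$ and $a-e^{-1}$; Lemmas~\ref{T4},~\ref{T5},~\ref{T7}; Lemma~\ref{T8} on its branch $a-2(\sqrt2-1)$; and Lemma~\ref{T9} on its second branch. In each case the containment collapses to $r(r^3+3r^2+3r+3)/(1-r^4)\le r_a$; clearing denominators and simplifying yields the stated polynomials. I would also record that the quartics arising in parts (2) and (7) factor as $(r^2+1)$ times a quadratic, whose smallest positive root is the advertised $(\sqrt2-1)/(\sqrt2+2)$, respectively $\sin1/(3+\sin1)$.

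Finally, for sharpness I would use $f_2$ from \eqref{f2}, for which $p_1(z)=(1+iz)/(1-iz)$ and $p_2(z)=1+iz$, so that
\[
  \frac{zf_2'(z)}{f_2(z)}=\frac{1+z^2}{1-z^2}+\frac{z^2+3iz}{1+z^2};
\]
at $z=ir$ this equals $(1-3r-3r^2-3r^3)/(1-r^4)$, the left endpoint of the disk above. For parts (1), (3)--(6) and (8) that endpoint is exactly the vertex/cusp/nearest boundary point of the target region at the critical radius (e.g.\ $\tfrac12$, $e^{-1}$, $h_c(-1)=\tfrac13$, $\sqrt2-1$, $h_R(-1)=2(\sqrt2-1)$), so those radii are sharp. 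The step I expect to be the genuine obstacle is the sharpness discussion for the lemniscate- and $RL$-type regions: there the containing disk meets its region on the far side rather than at a point of the imaginary-axis image of $f_2$, so a separate extremal construction is needed — which is precisely why parts (2) and (9) are only phrased as radii, not asserted to be best possible. Everything else is bookkeeping: carrying the three estimates cleanly through the denominator $1-r^4$ and selecting the correct branch of each region lemma.
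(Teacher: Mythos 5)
Your proposal follows the paper's proof essentially verbatim: the paper sets $p_2=g/f\in\mathcal{P}(1/2)$ directly (the same observation as your $|f/g-1|<1\Leftrightarrow \RE(g/f)>1/2$, just relabelled), derives the identical disk $\left|zf'/f-(1+r^4)/(1-r^4)\right|\le r(r^3+3r^2+3r+3)/(1-r^4)$, and feeds it into the same region lemmas on the same branches, with sharpness checked via $f_2$ at $z=ir$ hitting the left endpoint. The only substantive differences are in your favor: the quartic you obtain in part (2) is the correct one (the paper's displayed equation there carries stray coefficients $-4r^3-2r^2-4r$ from the $\mathcal{K}_1$ computation, though its stated root $(\sqrt{2}-1)/(\sqrt{2}+2)$ agrees with your corrected, factorable quartic), and you are candid that the left-endpoint argument does not settle sharpness for part (7) --- where Lemma~\ref{T5} with $a>1$ is binding at the right endpoint --- a point the paper asserts without verification.
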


\begin{proof}
Let $f \in \mathcal{K}_2$ and the function $g: \mathbb{D} \rightarrow \mathbb{C}$ be chosen such that
\begin{equation}\label{2.1}
  \left| {\frac{f(z)}{g(z)}}-1\right|< 1 \quad \text{and} \quad \RE{\Big( \frac{1-z^2}{z} g(z) \Big)}>0 \quad (z \in \mathbb{D}).
\end{equation}
Note that $| {f(z)}/{g(z)}-1|< 1 $ holds if and only if $\RE (g(z)/f(z)) > 1/2$
Let define $p_1, p_2: \mathbb{D}\rightarrow \mathbb{C}$ as
\begin{equation}\label{2.2}
  p_1(z)= \frac{1-z^2}{z} g(z) \quad \text{and} \quad p_2(z)= \frac{g(z)}{f(z)}.
\end{equation}
Then, by equations \eqref{2.1} and \eqref{2.2}, $p_1 \in \mathcal{P}$ and  $p_2 \in \mathcal{P}(1/2)$. Equation \eqref{2.2} also yields
\begin{equation*}
  f(z) = \frac{z}{1-z^2} \frac{p_1(z)}{ p_2(z)}.
\end{equation*}
Taking logarithm on both sides and differentiating with respect to $z$ gives
\begin{equation}\label{2.3}
  \frac{z f'(z)}{f(z)}= \frac{1+z^2}{1-z^2}+ \frac{z p_1'(z)}{p_1(z)}- \frac{z p_2'(z)}{p_2(z)}.
\end{equation}

By using equations \eqref{4},\eqref{5} and \eqref{2.3}, it can proven that the function $f$ maps the disk $|z|\leq r$ onto the disk
\begin{equation}\label{2.6}
  \left| \frac{z f'(z)}{f(z)} - \frac{1+r^4}{1-r^4} \right| \leq  \frac{r (r^3 + 3 r^2 + 3 r + 3)}{1-r^4}.
\end{equation}
From \eqref{2.6}, we can get
\begin{equation*}
  \RE \frac{z f'(z)}{f(z)} \geq  \frac{1 - 3 r ( r^2 +  r + 1)}{1-r^4} \geq 0.
\end{equation*}
Upon solving for $r$, we can conclude that the function $f \in \mathcal{K}_2$ is starlike in $|z|\leq 0.253077\cdots$.
The classes we are considering here are all subclasses of starlike functions, hence, all the radii we estimate here, will be less than $0.253077\cdots$. For the function $f_2$ defined in \eqref{f2}, we have
\begin{align*}
 \frac{zf'_2(z)}{f_2(z)} & = \frac{1 + 3 i z + 3 z^2 - 3 i z^3}{1- z^4}\\
                     & = \frac{1+ 3 i z (1- z^2) + 3 z^2  }{1 - z^4}
\end{align*}
At $z := ir  = i(0.253077)$, we have $zf_2'(z)/f_2(z)\approx 0$, thereby proving that the radius of starlikeness obtained for the class $\mathcal{K}_2$ is sharp.
\begin{enumerate}
  \item In order to compute $R_ {\mathcal{S}^*(\a)}$, we estimate the value of $r \in [0,1]$ satisfying
   \begin{equation*}
  \RE \frac{z f'(z)}{f(z)} \geq  \frac{1 - 3 r ( r^2 +  r + 1)}{(1-r^4)} \geq \a.
\end{equation*}
Therefore, the number $R_ {\mathcal{S}^*(\a)}$, is the root of the equation $\a r^4 - 3 r ( r^2 +  r + 1) +(1-\a)=0$ in $[0,1]$. For the function $f_2 \in \mathcal{K}_2$ given by \eqref{f2}, we have
\begin{align} \label{f22}
  \frac{z f'_2(z)}{f_2(z)} & = \frac{1 + 3 i z + 3 z^2 - 3 i z^3}{1- z^4}
\end{align}
At $z:= i r = {R}_{\mathcal{S}^*(\alpha)}$,  \eqref{f22} reduces to
\begin{align*}
  \frac{z f'_2(z)}{f_2(z)} & = \frac{1 - 3 r - 3 r^2 - 3 r^3}{1- r^4}
= \alpha,
\end{align*}
thereby proving that the radius is sharp.

\item We would use Lemma \ref{T1} to compute the lemniscate starlike radius for $f \in \mathcal{K}_2$. So, let $a = (1 + r^4)/(1-r^4)$. Then $1 \leq a < \infty$ for $r\in [0,1)$, and $a < \sqrt{2}$ when $r < \left(\left(\sqrt{2} - 1\right)/\left(\sqrt{2} + 1\right)\right)^{1/4}$. From \ref{2.6}, we know that $f\in \mathcal{K}_2$ maps the disk $|z| \leq r$ onto the disk
    \begin{equation}
  \left| \frac{z f'(z)}{f(z)} - \frac{1+r^4}{1-r^4} \right| \leq  \frac{r (r^3 + 3 r^2 + 3 r + 3)}{1-r^4}.
    \end{equation}
    So, consider
    \[
    \frac{r (r^3 + 3 r^2 + 3 r + 3)}{1-r^4} \leq \sqrt{2} - \frac{1+r^4}{1-r^4},
    \]
    and let $r^*$ be the smallest positive real root of the equation
    \[
    \left( \sqrt{2} + 2\right) r^4 - 4r^3 - 2r^2 - 4r + \left( 3 - 2\sqrt{2}\right) = 0.
    \]
    Then by Lemma \ref{T1}, the lemniscate starlike radius $R_{\mathcal{S}^*_L}$ for $f \in \mathcal{K}_2$ is given by
    \[
    R_{\mathcal{S}^*_L} = \min\left\{ \left(\frac{\sqrt{2} - 1}{\sqrt{2} + 1}\right)^{1/4}, r^*\right\} = r^* = \frac{\sqrt{2} -1}{\sqrt{2} + 2} = 0.12132\cdots
    \]
    This radius may not be sharp.

\item We use Lemma \ref{T2} to compute the parabolic starlike radius for $f \in \mathcal{K}_2$. For $a = (1 + r^4)/(1-r^4)$, we have $a \leq 3/2$ if $r \leq (1/5)^{1/4} \approx 0.668740305$. by Lemma \ref{T2}, consider
    \[
     \frac{r (r^3 + 3 r^2 + 3 r + 3)}{1-r^4} \leq \frac{1+r^4}{1-r^4} - \frac{1}{2},
    \]
    and let $r^*$ be the smallest positive real root of the equation
    \[
    r^4 - 6r^3 - 6r^2 - 6r + 1 = 0.
    \]
    Then the $\mathcal{S}_{P}$-radius is
    \[
    R_{\mathcal{S}_P} = \min \left\{ \left( \frac{1}{5}\right)^{1/4}, r^*\right\} = r^* \approx 0.1432698.
    \]

We see that sharpness follows for the function $f_2 \in \mathcal{K}_2$ defined in \eqref{f2}. As shown previously, at $z := i r $, we have
\begin{equation*}
   \frac{z f_2'(z)}{f_2(z)}  = \frac{1 - 3 r - 3 r^2 - 3 r^3}{1- r^4}.
\end{equation*}
Thus,
\begin{align*}
\left| \frac{zf_2'(z)}{f_2(z)}- 1  \right| &= \left| \frac{r (3 + 3 r + 3 r^2 - r^3)}{1 - r^4}\right|
\end{align*}
For $z := i r =  i {R}_{\mathcal{S}_P}$, we have
\begin{align*}
  \RE  \frac{z f_2'(z)}{f_2(z)}  &= \frac{1 - 3 r - 3 r^2 - 3 r^3}{1- r^4} \quad(\approx 0.5)\\
&=\frac{r (3 + 3 r + 3 r^2 - r^3)}{1 - r^4}=  \left| \frac{zf_2'(z)}{f_2(z)}- 1  \right|.
\end{align*}
Thus the radius obtained is sharp for the function $f_2$.

\item For the $\mathcal{S}_e$-radius of $f\in \mathcal{K}_2$, we will use Lemma \ref{T3} since if $a = (1 + r^4)/(1-r^4)$, $0\leq r <1$, we have $a < e$ for $r < [(e-1)/(e+1)]^{1/4} \approx 0.82449$. Also, since $a \leq \frac{1}{2}(e + e^{-1})$ for $r < [(e-1)/(e+1)]^{2} \approx 0.213552$, consider
    \[
    \frac{r (r^3 + 3 r^2 + 3 r + 3)}{1-r^4} \leq \frac{1+r^4}{1-r^4} - \frac{1}{e},
    \]
    and let $r^*$ be the smallest positive real root of the equation
    \[
    r^4 - e(3r^3 + 3r^2 + 3r) + e - 1 = 0.
    \]
    Then the $\mathcal{S}_{e}$-radius is
    \[
    R_{\mathcal{S}_e} = \min \left\{ \left( \frac{e-1}{e+1}\right)^{2}, r^*\right\} = r^* \approx 0.174887.
    \]

\begin{equation*}
  \left|\log \frac{zf'_2(z)}{f_2(z)} \right| = \left| \log \frac{1 - 3 r - 3 r^2 - 3 r^3}{1- r^4}\right|=1,
\end{equation*}
 hence proving that the exponential starlike radius obtained for the class $\mathcal{K}_2$ is sharp.

\item By using Lemma \ref{T4}, it can be proven similarly as above that the $\mathcal{S}^*_c$-radius $R_{\mathcal{S}^*_c}$ for the class $\mathcal{K}_2$ is the smallest positive real root of the equation $9 r^3 + 9 r^2 + 9 r - 2 - r^4 = 0$, which is $R_{\mathcal{S}^*_c} \approx 0.182815$.

    The radius obtained is sharp for the function $f_2$ defined in \eqref{f2} as for $z := ir = i {R}_{\mathcal{S}^*_c}$,
\begin{equation*}
    \frac{zf'_2(z)}{f_2(z)} =  \frac{1 -3 r - 3 r^2 -3 r^3}{1 - r^4} =\frac{1}{3}=h_c(-1)\in \partial h_c(\mathbb{D}).
\end{equation*}%

\item  The $\mathcal{S}^*_{\leftmoon}$-radius ${R}_{\mathcal{S}^*_{\leftmoon}}$ for the class $\mathcal{K}_2$ is the smallest positive real root of the equation $r^4 (1- \sqrt{2}) + 3 r^3 + 3 r^2 + 3 r = 2 - \sqrt{2}$. This can be obtained by considering the inequality
    \[
    \frac{3 r^3 +3 r^2 + 3 r - 1}{1- r^4} \leq 1- \sqrt{2}
    \]
    and then using Lemma \ref{T7}.

    The radius is sharp for the function $f_2$ defined in \eqref{f2}, since at $z := i r = i{R}_{\mathcal{S}^*_{\leftmoon}}$, we have
\begin{align*}
  \left|  \left( \frac{zf_2'(z)}{f_2(z)}\right)^2 - 1 \right| &= \left| \left( \frac{1 - 3 r - 3 r^2 - 3 r^3}{1- r^4}\right)^2 -1 \right| \quad (\approx 0.82842)\\
  &= 2\left| \frac{1 - 3 r - 3 r^2 - 3 r^3}{1- r^4} \right| = 2 \left|\frac{zf_2'(z)}{f_2(z)} \right|.
\end{align*}

\item In order to find the $\mathcal{S}^*_{\sin}$-radius for the function $f \in \mathcal{K}_2$, we make use of Lemma \ref{T5}. It is easy to see that $ 1 - \sin 1 < a = {(1+r^4)}/{(1-r^4)} < 1+ \sin 1 $ for $r < [(\sin{1})/(2 + \sin{1})]^{1/4}$. Since $a>1$, consider
    \begin{equation*}\label{11}
     \frac{r (r^3 + 3 r^2 + 3 r + 3)}{1-r^4} \leq \sin 1 - \left(\frac{1+r^4}{1-r^4} - 1\right).
    \end{equation*}
    Then the $\mathcal{S}^*_{\sin}$-radius, $R_{\mathcal{S}^*_{\sin}} (\approx 0.219049)$, is the smallest positive real root of the equation
    \[
    (3 + \sin{1})r^4 + 3r(r^2 + r +1) = \sin{1}.
    \]
The radius obtained is sharp for the function $f_2$ defined in \eqref{f2}.

\item We use Lemma \ref{T8} in order to compute the $\mathcal{S}^*_R$- radius for the class $\mathcal{K}_2$. Since $2 (\sqrt{2}-1) < a = {(1+ r^4)}/{(1- r^4)}\leq \sqrt{2}$ for $r < [(\sqrt{2} - 1)/(\sqrt{2} + 1)]^{1/4} \approx 0.64359$, by Lemma \ref{T8}, we consider
    \[
    \frac{r(r^3 + 3 r^2 +3 r + 3)}{1-r^4} \leq  \frac{1 + r^4}{1 - r^4} - 2(\sqrt{2} - 1).
    \]
    Then the $\mathcal{S}^*_R$- radius for $\mathcal{K}_2$ can be computed to be $R_{\mathcal{S}^*_{R}} \approx 0.0870259$. The radius obtained is sharp for the function $f_2$ defined in \eqref{f2}. 
%
Indeed, for the function $f_2$ defined in \eqref{f2}, we have
at $z := ir = i {R}_{\mathcal{S}^*_R}$,
\begin{equation*}
    \frac{zf'_2(z)}{f_2(z)} =  \frac{1 - 3 r - 3 r^2 -3 r^3}{1 - r^4} =2 \sqrt{2} - 2=h_R(-1)\in \partial h_R(\mathbb{D}).
\end{equation*}
This shows that the result is sharp.

\item Finally, for the $\mathcal{S}^*_{RL}$- radius, $R_{\mathcal{S}^*_{RL}}$, for the class $\mathcal{K}_2$, by Lemma \ref{T9}, the value of $R_{\mathcal{S}^*_{RL}} \approx 0.0541073$ is obtained from solving the equation
    \begin{align*}
    (1- r^4) \big\{  (1- r^4)^2  - \big( (\sqrt{2} - 1)& - (\sqrt{2} + 1) r^4 \big)^2 \big\}^{1/2} =  (r^4 + 3r^3 + 3r^2 + 3r)^2 \\& \quad +(1- r^4)^2 - \big((\sqrt{2} - 1)-  (\sqrt{2} + 1) r^4 \big)^2. \qedhere
    \end{align*}

\end{enumerate}
\end{proof}
\bigskip

The last theorem aims at computing the various radii of starlikeness for the function $f \in \mathcal{K}_3$ that satisfies  $\RE  ((1-z^2)f(z)/z) >0$.
Consider the function $f_3: \mathbb{D}\rightarrow \mathbb{C}$ defined by
\begin{equation} \label{f3}
  f_3(z) = \frac{z(1+ i z)}{(1-z^2)(1- i z)}
\end{equation}
Clearly,
\begin{equation*}
  \RE  \frac{(1-z^2)}{z}f_3(z)  = \RE  \frac{1+ i z}{1- i z} >0.
\end{equation*}
Therefore, the function $f_3 \in \mathcal{K}_3$ and $\mathcal{K}_3 \neq \phi$. This function $f_3$ would serve as an extremal function for various radius problems in the following theorem.

\begin{theorem}
 For $f \in \mathcal{K}_3$, the following results hold:
\begin{enumerate}
  \item The sharp $\mathcal{S}^*(\a)$ radius is the smallest positive real root of the equation $ (1+\a) r^4 - 2 r (r^2 + r + 1) + (1- \a)=0$, \quad $0 \leq \a <1$.
  \item The sharp $\mathcal{S}^*_L$ radius is ${R}_{\mathcal{S_L}}= (\sqrt{2}-1)/(\sqrt{2}+1) \approx 0.171573$.
  \item The sharp $\mathcal{S}_{P}$ radius is the smallest positive real root of the equation $ 4 r^3 + 4 r^2 + 4 r -1  - 3r^4   = 0$ i.e. ${R}_{\mathcal{S}_{P}} \approx 0.2021347$.
  \item The sharp $\mathcal{S}^*_{e}$ radius is the smallest positive real root of the equation $(2 r^3 + 2 r^2 + 2 r -1 - r^4)e + 1 - r^4 = 0$ i.e.  ${R}_{\mathcal{S}^*_{e}} \approx 0.244259$.
  \item The sharp $\mathcal{S}^*_{c}$ radius is the smallest positive real root of the equation $  3 r^3 + 3 r^2 + 3 r - 1 - 2 r^4 = 0 $ i.e. ${R}_{\mathcal{S}^*_{c}} \approx 0.254726$.
\item The sharp $\mathcal{S}^*_{\leftmoon}$ radius is the smallest positive real root of the equation $  2 r^3 + 2 r^2 + 2 r - \sqrt{2} r^4  = 2 - \sqrt{2}$ i.e. ${R}_{\mathcal{S}^*_{\leftmoon}} \approx 0.229877$.
  \item The sharp $\mathcal{S}^*_{\sin}$ radius is ${R}_{\mathcal{S}^*_{\sin}}= \sin{1}/(2 + \sin{1}) \approx 0.296139$.
\item The sharp $\mathcal{S}^*_{R}$ radius is the smallest positive real root of the equation $ r^4 + 2 r^3 + 2 r^2 +  2 r  -3 + 2 \sqrt{2} (1 - r^4)=0  $ i.e. ${R}_{\mathcal{S}^*_{R}} \approx 0.0790749$.
  \item The $\mathcal{S}^*_{RL}$ radius is ${R}_{\mathcal{S}^*_{RL}} \approx 0.125145$.
\end{enumerate}
\end{theorem}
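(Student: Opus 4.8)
The plan is to mirror the proofs of the two preceding theorems, now exploiting that membership in $\mathcal{K}_3$ supplies the required factorization directly. Given $f\in\mathcal{K}_3$, put $p(z)=(1-z^2)f(z)/z$; then $p\in\mathcal{P}$ and $f(z)=\frac{z}{1-z^2}p(z)$, so logarithmic differentiation gives
\[
\frac{zf'(z)}{f(z)}=\frac{1+z^2}{1-z^2}+\frac{zp'(z)}{p(z)}.
\]
Feeding \eqref{4} into the first summand and \eqref{5} with $\alpha=0$ (so $|zp'/p|\le 2r/(1-r^2)$) into the second, and adding the two radii, I obtain that every $f\in\mathcal{K}_3$ maps $|z|\le r$ into the disk
\[
\left|\frac{zf'(z)}{f(z)}-\frac{1+r^4}{1-r^4}\right|\le\frac{2r(r^2+r+1)}{1-r^4}.
\]
Writing $a=(1+r^4)/(1-r^4)$ and $\rho=2r(r^2+r+1)/(1-r^4)$ for the center and radius, a short calculation records the endpoints of the real diameter of this disk as $a-\rho=(1-2r-2r^2-2r^3+r^4)/(1-r^4)$ and $a+\rho=(1+r)/(1-r)$.

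I would first read off the radius of starlikeness by imposing $a-\rho\ge 0$ — the $\alpha=0$ case of part (1), giving the smallest positive root of $r^4-2r(r^2+r+1)+1=0$ (about $0.35$). Because this bounds all the remaining radii, $a$ stays well below $\sqrt2$, $3/2$, $(e+e^{-1})/2$, $5/3$, $1+\sin1$, and $2$ throughout, so in every application the piecewise quantity $r_a$ of the relevant lemma is given by its branch valid for $a$ near $1$. Then, for each target class, I would invoke the containment lemma (Lemma~\ref{T1} for $\mathcal{S}^*_L$, Lemma~\ref{T2} for $\mathcal{S}_P$, Lemma~\ref{T3} for $\mathcal{S}^*_e$, Lemma~\ref{T4} for $\mathcal{S}^*_c$, Lemma~\ref{T7} for $\mathcal{S}^*_{\leftmoon}$, Lemma~\ref{T5} for $\mathcal{S}^*_{\sin}$, Lemma~\ref{T8} for $\mathcal{S}^*_R$, and Lemma~\ref{T9} for $\mathcal{S}^*_{RL}$), turn $\rho\le r_a$ into a polynomial inequality by clearing $1-r^4$, and take the smallest positive root, after checking that it falls below the auxiliary threshold that keeps $a$ inside the admissible range of the lemma. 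For parts (2) and (7) the inequality collapses to $a+\rho\le\sqrt2$ and $a+\rho\le1+\sin1$, i.e.\ $(1+r)/(1-r)\le\sqrt2$ and $\le1+\sin1$, yielding the closed forms $R_{\mathcal{S}^*_L}=(\sqrt2-1)/(\sqrt2+1)$ and $R_{\mathcal{S}^*_{\sin}}=\sin1/(2+\sin1)$; one may also note that $(1+\sqrt2)r^4+2r^3+2r^2+2r+(1-\sqrt2)$ factors as $(r^2+1)\big((1+\sqrt2)r^2+2r+(1-\sqrt2)\big)$, which confirms the form of $R_{\mathcal{S}^*_L}$.

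For sharpness I would produce the two explicit members of $\mathcal{K}_3$, each of the shape $\frac{z}{1-z^2}p(z)$ with $p\in\mathcal{P}$. When the binding constraint fixes the left endpoint $a-\rho$ — this occurs for $\mathcal{S}^*(\alpha)$, $\mathcal{S}_P$, $\mathcal{S}^*_e$, $\mathcal{S}^*_c$, $\mathcal{S}^*_{\leftmoon}$, and $\mathcal{S}^*_R$ — I would use $f_3$ of \eqref{f3}: one computes $zf_3'(z)/f_3(z)=(1+z^2)/(1-z^2)+2iz/(1+z^2)$, so at $z=ir$ this is the real number $a-\rho=(1-2r-2r^2-2r^3+r^4)/(1-r^4)$, which at $r$ equal to the pertinent radius is exactly the relevant boundary point of the target region ($\alpha$, $1/2$, $e^{-1}$, $1/3$, $\sqrt2-1$, or $h_R(-1)=2\sqrt2-2$). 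When the constraint fixes the right endpoint $a+\rho$ — for $\mathcal{S}^*_L$ and $\mathcal{S}^*_{\sin}$ — I would use $f(z)=z/(1+z)^2$, which lies in $\mathcal{K}_3$ since $(1-z^2)f(z)/z=(1-z)/(1+z)$ has positive real part, and for which $zf'(z)/f(z)=(1-z)/(1+z)$; at $z=-r$ this equals $(1+r)/(1-r)=a+\rho$, which becomes $\sqrt2$ (hence $|(zf'/f)^2-1|=1$) at $r=R_{\mathcal{S}^*_L}$ and $1+\sin1\in\partial\Omega_{\sin}$ at $r=R_{\mathcal{S}^*_{\sin}}$.

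The one laborious step is part (9). There $a$ lies in the second case of Lemma~\ref{T9}, so $r_a=\big(\sqrt{1-(\sqrt2-a)^2}-(1-(\sqrt2-a)^2)\big)^{1/2}$; to clear it I would set $u=1-(\sqrt2-a)^2$, rewrite $\rho\le r_a$ as $\rho^2+u\le\sqrt u$, square, substitute $a=(1+r^4)/(1-r^4)$, and simplify, which leaves an equation still carrying one radical — this is why, as for $\mathcal{K}_2$, part (9) is not claimed sharp: the constraint selects a non-real boundary point of $\Omega_{RL}$, which neither $f_3$ nor $z/(1+z)^2$ meets on the real axis. The main obstacle is checking that the squaring is reversible (both sides stay nonnegative over the relevant $r$-range); everything else is routine algebra with the same shape as in the first two theorems.
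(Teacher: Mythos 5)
Your proposal is correct and follows essentially the same route as the paper: the same factorization $f(z)=\tfrac{z}{1-z^2}p(z)$ with $p\in\mathcal{P}$, the same disk $\bigl|zf'/f-(1+r^4)/(1-r^4)\bigr|\le 2r(r^2+r+1)/(1-r^4)$, the same containment lemmas on the branch with $a$ near $1$, and the same extremal functions $f_3$ (left endpoint, at $z=ir$) and $z/(1+z)^2$ (right endpoint, at $z=-r$). Your closed forms $a-\rho$ and $a+\rho=(1+r)/(1-r)$, the factorization through $r^2+1$ in part (2), and the sign $z=-r$ in the $\mathcal{S}^*_L$ sharpness are all correct (the paper writes $z=r$ there, a slip), and your use of $z/(1+z)^2$ for the $\mathcal{S}^*_{\sin}$ sharpness is the right choice where the paper merely asserts sharpness for $f_3$.
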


\begin{proof}
Let the function $f \in \mathcal{K}_3$. Then
\begin{equation}\label{3.1}
  \RE{\Big( \frac{1-z^2}{z} f(z) \Big)}>0 \quad (z \in \mathbb{D}).
\end{equation}
Define $p: \mathbb{D}\rightarrow \mathbb{C}$ as
\begin{equation}\label{3.2}
  p(z)= \frac{1-z^2}{z} f(z)
\end{equation}
Therefore, by equation \eqref{3.1}, we have $p \in \mathcal{P}$ and 
\begin{equation*}
  f(z) = \frac{z}{(1-z^2)} p(z).
\end{equation*}
From this, take logarithm on both sides and then differentiate with respect to $z$:
\begin{equation}\label{3.3}
  \frac{z f'(z)}{f(z)}= \frac{1+z^2}{1-z^2}+ \frac{z p'(z)}{p(z)}.
\end{equation}
By using equations \eqref{4},\eqref{5} and \eqref{3.3}, we can prove that the function $f$ maps the disk $|z|\leq r$ onto the disk
\begin{equation}\label{3.6}
  \left| \frac{z f'(z)}{f(z)} - \frac{1+r^4}{1-r^4} \right| \leq  \frac{2 r (r^2 +  r + 1)}{1-r^4}.
\end{equation}
In order to solve radius problems, we are interested in computing the value of $r$ for which the disk in \eqref{3.6} is contained in the corresponding regions.
Again the classes we are considering here are all subclasses of starlike functions and therefore, are defined by the quantity $z f'(z)/f(z)$ lying in some region in the right half plane. In particular, for $f$ to be in $\mathcal{S^*}$, we need
\begin{equation*}
  \RE \frac{z f'(z)}{f(z)} \geq  \frac{1 + r^4 - 2 r ( r^2 +  r + 1)}{1-r^4} \geq 0.
\end{equation*}
Thus the function $f \in \mathcal{K}_3$ is starlike in $|z|\leq 0.346014$. With this, now all the radii we estimate here shall be less than $0.346014$.

For the function $f_3$ defined in \eqref{f3}, we have
\begin{align}\label{f33}
 \frac{zf'_3(z)}{f_3(z)} & = \frac{1 + 2 i z + 2 z^2 - 2 i z^3 + z^4}{1- z^4}\\ \notag
                     & = \frac{1+ 2 i z (1- z^2) + 2 z^2 + z^4 }{1 - z^4}.
\end{align}
At $z := ir = i(0.346014)$, we have $zf'_3(z)/f_3(z)\approx 0$, thereby proving that the radius of starlikeness obtained for the class $\mathcal{K}_3$ is sharp.
\begin{enumerate}
  \item To determine the radius $R_ {\mathcal{S}^*(\a)}$ of starlikeness of order $\alpha$,  we estimate the value of $r \in [0,1]$ satisfying
   \begin{equation*}
  \RE \frac{z f'(z)}{f(z)} \geq  \frac{1 + r^4 - 2 r ( r^2 +  r + 1)}{1-r^4} \geq \a.
    \end{equation*}
 Hence, $R_ {\mathcal{S}^*(\a)}$ is the root of the equation $(1 + \a) r^4 - 2 r ( r^2 +  r + 1) +(1-\a)=0 $ in $[0,1]$. From \eqref{f33}, if $z:= i r = i{R}_{\mathcal{S}^*(\alpha)}$, then \eqref{f33} reduces to
\begin{align*}
  \frac{z f_3'(z)}{f_3(z)} & = \frac{1 - 2 r - 2 r^2 - 2 r^3 + r^4}{1- r^4}
= \alpha,
\end{align*}
which shows that $f_3$ is the extremal function.

\item We can use Lemma \ref{T1} to compute the lemniscate starlike radius for $f \in \mathcal{K}_3$. For $a = (1 + r^4)/(1-r^4)$, we have $a < \sqrt{2}$ when $r < \left(\left(\sqrt{2} - 1\right)/\left(\sqrt{2} + 1\right)\right)^{1/4}$. By \eqref{2.6} and Lemma \ref{T1}, consider
    \[
    \frac{2r (r^2 + r + 1)}{1-r^4} \leq \sqrt{2} - \frac{1+r^4}{1-r^4},
    \]
    and let $r^*$ be the smallest positive real root of the equation
    \[
    \left( \sqrt{2} + 1\right) r^4 + 2r^3 + 2r^2 + 2r + \left( 1 - \sqrt{2}\right) = 0.
    \]
     Hence, the lemniscate starlike radius $R_{\mathcal{S}^*_L}$ for $f \in \mathcal{K}_3$ is given by
    \[
    R_{\mathcal{S}^*_L} = \min\left\{ \left(\frac{\sqrt{2} - 1}{\sqrt{2} + 1}\right)^{1/4}, r^*\right\} = r^* = \frac{1 - \sqrt{2}}{1 + \sqrt{2}} = 0.1715728753\ldots.
    \]

    For the sharpness, consider the function $\hat{f_3}: \mathbb{D}\rightarrow \mathbb{C}$ defined by
    \begin{equation} \label{f_31}
    \hat{ f_3}(z) = \frac {z}{(1+z)^2}.
    \end{equation}
    Clearly,
    \begin{equation*}
        \RE  \frac{1-z^2}{z}\hat{f_3}(z)  = \RE  \frac{1 - z}{1+  z} >0.
    \end{equation*}
    So $\hat{f_3} \in \mathcal{K}_3$. Also
    \begin{equation*}
    \left| \left(\frac{z\hat{f_3}'(z)}{\hat{f_3}(z)}\right)^2 -1  \right| = \left| \left(\frac{1 - z}{1 + z}\right)^2 -1 \right|.
    \end{equation*}
    Now, for $z := r = {R_{\mathcal{S}^*_L}}$, we have $(1 - z)/(1 + z) = \sqrt{2}$ and
    \begin{equation*}
    \left| \left(\frac{z\hat{f_3}'(z)}{\hat{f_3}(z)}\right)^2 -1  \right| = \left| \left(\sqrt{2}\right)^2 -1 \right|=1,
    \end{equation*}
    Therefore, the radius obtained is sharp for the function $\hat{f_3}$.

\item For the parabolic starlike radius for $f \in \mathcal{K}_3$, we would use Lemma \ref{T2}. For $r \leq (1/5)^{1/4}$, we have $a= {(1+r^4)}/{(1-r^4)}\leq 3/2$, and if we consider
    \begin{equation}\label{39}
   \frac{ 2 r ( r^2 +  r + 1)}{1-r^4} \leq \frac{1+r^4}{1-r^4} -\frac{1}{2},
    \end{equation}
    then the $\mathcal{S}_P$-radius is given by
    \[
    R_{\mathcal{S}_P} = \min\left\{ \left(\frac{1}{5}\right)^{1/4}, r^*\right\} = r^* \approx 0.2021347,
    \]
    where $r^*$ is the smallest positive real root of the equation $3r^4 - 4 r^3 - 4 r^2 - 4 r + 1 = 0$.

    The sharpness of the result follows for the function $f_3$ defined in \eqref{f3}. As shown previously, at $z := i r$, we have
\begin{equation*}
   \frac{z f_3'(z)}{f_3(z)}  = \frac{1 - 2 r - 2 r^2 - 2 r^3 + r^4}{1- r^4}.
\end{equation*}
Then for $z := i r = {R}_{\mathcal{S}_P}$, we have
\begin{align*}
  \RE  \frac{z f'_3(z)}{f_3(z)}  &= \frac{1 - 2 r - 2 r^2 - 2 r^3 + r^4}{1- r^4} \quad (= 0.5)\\
&= \frac{2 r (1 + r + r^2 - r^3)}{1 - r^4}=  \left| \frac{zf_3'(z)}{f_3(z)}- 1  \right|,
\end{align*}
thus illustrates the radius obtained is sharp for the function $f_3$.

\item By using Lemma \ref{T3} and considering
\begin{equation*}
 \frac{2 r ( r^2 +  r + 1)}{(1-r^4)} \leq \frac{1+r^4}{1-r^4} -\frac{1}{e},
\end{equation*}
it can be proven similarly as above that the exponential starlike radius $R_{\mathcal{S}^*_e}$ for the class $\mathcal{K}_3$ is the smallest positive real root of the equation $(2 r^3 + 2 r^2 + 2 r -1 - r^4)e + 1 - r^4 = 0$. Again, for the function $f_3$ given in \eqref{f3}, at  $z := i r = i{R}_{\mathcal{S}^*_e} \approx i(0.244259)$, we have
\begin{equation*}
  \left|\log \frac{zf'_3(z)}{f_3(z)} \right| = \left| \log \frac{1 - 2 r - 2 r^2 - 2 r^3 + r^4}{1- r^4}\right|=1,
\end{equation*}
thereby proving that the result obtained is sharp.

\item For the $\mathcal{S}^*_c$-radius for the class $\mathcal{K}_3$, we use Lemma \ref{T4} by considering
        \begin{equation*}
        \frac{2 r ( r^2 +  r + 1)}{1-r^4} \leq \frac{1+r^4}{1-r^4} -\frac{1}{3}.
        \end{equation*}
       Then it can be proven similarly as above that $R_{\mathcal{S}^*_c}$ is the smallest positive real root of the equation $$3 r^3 + 3 r^2 + 3 r - 1 - 2 r^4 = 0.$$
       The radius obtained is sharp for the function $f_3$ defined in \eqref{f3}.

Indeed, for the function $f_3$, we have
at $z := i r = i {R}_{\mathcal{S}^*_c} \approx i(0.254716)$ that
\begin{equation*}
    \frac{zf'_3(z)}{f_3(z)} =  \frac{1 - 2 r - 2r^2 -2 r^3 +r^4}{1 - r^4} =\frac{1}{3}=h_c(-1)\in \partial h_c(\mathbb{D}).
\end{equation*}

\item  By proving similarly as above, the $\mathcal{S}^*_{\leftmoon}$-radius for the class $\mathcal{K}_3$ is the smallest positive real root of the equation
    \[ 2 r^3 + 2 r^2 + 2 r - \sqrt{2} r^4 = 2 - \sqrt{2}.\]
    In this case, we would use Lemma \ref{T7} and consider
    \begin{equation*}
     \frac{2 r ( r^2 +  r + 1)}{1- r^4} \leq 1- \sqrt{2} + \frac{1 + r^4}{1 - r^4}.
    \end{equation*}
    The radius is sharp for the function $f_3$ defined in \eqref{f3}, since at $z := ir = i{R}_{\mathcal{S}^*_{\leftmoon}}$, we have
\begin{align*}
  \left|  \left( \frac{zf_3'(z)}{f_3(z)}\right)^2-1 \right| &= \left| \left( \frac{1 - 2 r - 2 r^2 - 2 r^3 + r^4}{1- r^4}\right)^2 -1 \right| \\&= 2\left| \frac{1 - 2 r - 2 r^2 - 2 r^3 + r^4}{1- r^4} \right| = 2 \left|\frac{zf_3'(z)}{f_3(z)} \right|.
\end{align*}

\item In order to find the $\mathcal{S}^*_{\sin}$-radius for the function $f \in \mathcal{K}_3$, we make use of Lemma \ref{T5}, where we would consider
    \begin{equation*}\label{11}
    \frac{2 r ( r^2 +  r + 1)}{(1-r^4)} \leq \sin 1 - \frac{2 r^4}{1-r^4}.
    \end{equation*}
    The $\mathcal{S}^*_{\sin}$-radius, $R_{\mathcal{S}^*_{\sin}}$, is smallest positive real root of the equation
    \[
    2r(r^3 + r^2 + r + 1) = (\sin{1})(1 - r^4).
    \]
    The radius obtained is sharp for the function $f_3$ defined in \eqref{f3}.  

\item We use Lemma \ref{T8} to compute the $\mathcal{S}^*_R$- radius for the class $\mathcal{K}_3$. By considering
    \begin{equation*}
\frac{2 r ( r^2 +  r + 1)}{1-r^4} \leq \frac{1 + r^4}{1 - r^4} - 2(\sqrt{2} - 1),
\end{equation*}
we would obtain $R_{\mathcal{S}^*_{R}}$ to be given by the smallest positive real root of the equation
    \[
    (2\sqrt{2} - 1)r^4 - 2r(r^2 + r + 1) + (3 - 2\sqrt{2}) = 0.
    \]

The radius obtained is sharp for the function $f_3$ defined in \eqref{f3}.


Indeed, for the function $f_3$, we have at $z := i r = i {R}_{\mathcal{S}^*_c} \approx 0.0790749$,
\begin{equation*}
    \frac{zf'_3(z)}{f_3(z)} =  \frac{1  - 2 r - 2r^2 -2 r^3 + r^4}{1 - r^4} = 2 \sqrt{2}-2=h_R(-1)\in \partial h_R(\mathbb{D}).
\end{equation*}

\item Lastly, the $\mathcal{S}^*_{RL}$- radius for the class $\mathcal{K}_3$ is obtained by using Lemma \ref{T9} and from the equation
    \begin{align*}
 (1- r^4) \big\{  (1- r^4)^2  - \big( (\sqrt{2} - 1) - (\sqrt{2} + 1) r^4 \big)^2 \big\}^{1/2} &=  (2r^3+2r^2+2r)^2 +(1- r^4)^2 \\& \quad - \big((\sqrt{2} - 1)-  (\sqrt{2} + 1) r^4 \big)^2. \qedhere
\end{align*}

\end{enumerate}
\end{proof}

\subsection*{\textbf{Acknowledgment.}} The second author gratefully acknowledges support from USM research university grants 1001.PMATHS.8011101. 


\end{document}